\newcommand{\eff}{\ensuremath{\mathbb{F}}}
\newcommand{\zed}{\ensuremath{\mathbb{Z}}}
\renewcommand{\S}{\ensuremath{\mathcal{S}}}
\newcommand{\K}{\ensuremath{\mathcal{K}}}
\newcommand{\A}{\ensuremath{\mathcal{A}}}
\newcommand{\E}{\ensuremath{\mathcal{E}}}
\newcommand{\G}{\ensuremath{\mathcal{G}}}
\newcommand{\TT}{\ensuremath{\mathcal{T}}}
\newcommand{\D}{\ensuremath{\mathcal{D}}}
\newcommand{\prob}{\ensuremath{\mathbf{Pr}}}
\newcommand{\good}{\ensuremath{\mathsf{Good}}}
\newcommand{\he}{\ensuremath{\hat{\epsilon}}}
\newtheorem{theorem}{Theorem}[section]
\newtheorem{lemma}[theorem]{Lemma}
\newtheorem{corollary}[theorem]{Corollary}
\newtheorem{example}{Example}[section]
\newtheorem{definition}{Definition}[section]
\begin{document}

\title{Combinatorial Characterizations of Algebraic Manipulation Detection Codes Involving Generalized Difference Families}
\author[1]{Maura~B.~Paterson}
\author[2]{Douglas~R.~Stinson\thanks{D.~Stinson's research is supported by NSERC discovery grant 203114-11.}}
\affil[1]{Department of Economics,
Mathematics and Statistics\\ Birkbeck, University of London,
Malet Street, London WC1E 7HX, UK}
\affil[2]{David R.\ Cheriton School of Computer Science\\ University of Waterloo,
Waterloo, Ontario, N2L 3G1, Canada}

\date{\today}

\maketitle

\begin{abstract}This paper provides a mathematical analysis of {optimal}
algebraic manipulation detection (AMD) codes. We prove several lower bounds on the success probability
of an adversary and we then give some combinatorial characterizations of AMD codes that meet the bounds with equality. These characterizations involve various types of generalized difference families. Constructing
these difference families is an interesting problem in its own right.
\end{abstract}

\section{Introduction}
\label{notation.sec}

Algebraic manipulation detection (AMD) codes were defined in 
2008 by Cramer {\it et al.} \cite{CDFPW1,CDFPW2} as a generalization and abstraction of
techniques that were previously used in the study of robust secret sharing schemes \cite{OKSS,OKS,TW}. 
AMD codes are studied further in \cite{AS,CFP,CPX}. Several interesting and useful applications
of these structures are described in these papers, including applications to 
robust fuzzy extractors, secure multiparty computation, non-malleable codes, etc.
Various construction methods for AMD codes are also presented in these papers. 

We begin by providing some motivating examples as well as some
historical context from the point of view of
authentication codes. AMD codes can be 
considered as a variation of the classical {\it unconditionally secure authentication codes} \cite{Sim},
which we will refer to as  {\it $A$-codes} for short. An $A$-code has the form
$(\S, \TT, \K, \E)$  where $\S$ is a set of plaintext {\it sources}, $\TT$ is a set of 
{\it tags}, $\K$ is a set of {\it keys} and $\E$ is a set of {\it encoding functions}.
For each $K \in \K$, there is a (possibly randomized)
{encoding function} $E_K : \S \rightarrow \TT$. A secret key $K \in \K$ is chosen
randomly. Later a source $s \in \S$ is selected and the tag $t = E_K(s)$ is completed.
The tag $t$ is authenticated by verifying that $t = E_K(s)$; this can be done only with knowledge
of the key $K$. Having seen a valid pair $(s,t)$, 
an active adversary may create a bogus pair $(s',t')$ (where $s' \neq s$), hoping that it will be
accepted as authentic (this process is called {\it substitution}). 
The adversary is trying to maximize the {\it success probability}
of such an attack. One main objective is to design $A$-codes that will minimize the
success probability of the adversary.

\begin{example}
\label{ACexam}
Let $p$ be prime and define 
$\S = \TT = \zed_p$.
Define $\K = \zed_p \times \zed_p$.  
For every $K = (c,d)\in \K$,
define the function $E_K$ by the rule
$s \mapsto cs+d \bmod p$ for all $s \in \zed_p$.
(That is, the encoding functions consist of all linear functions 
from $\zed_p$ to $\zed_p$.) Any observed source-tag pair $(s,t)$
is valid under exactly $p$ of the $p^2$ keys. Then, any substitution $(s',t')$ ($s' \neq s$)
is valid under exactly $1$ of the $p$ ``possible'' keys. Therefore, the adversary's success
probability is $1/p$.
\end{example}

There are two types of AMD codes. The first type is a {\it weak AMD code}.
Here there is no key, so there is only one encoding function $E$.
Further, the tag is 
an element of a finite additive abelian group, say $\G$. The adversary
is required to commit to a specific substitution of the form
$g \mapsto g + \Delta$, where $\Delta \in \G \setminus \{0\}$ is fixed. 
Later, a source $s \in S$ is chosen randomly and encoded to $g = E(s)$.
Then $g$ is replaced by $g' = g + \Delta$. The adversary wins if
$g' = e(s')$ for some $s \neq s'$. Again, the objective in designing such a code
is to minimize the adversary's success probability.

\begin{example}
\label{weakexam}
Let $\S = \{1,2,3,4,5\}$ and let $\G = \zed_{21}$.
The encoding function $E$ is defined by
$E(1) = 3$,  $E(2) = 6$, $E(3) = 12$, $E(4) = 7$ and $E(5) = 14$.
It turns out that the adversary's success probability is $1/5$, independent of
his choice of $\Delta \neq 0$. This follows because $\{3,6,12,7,14\}$ is a
{\it difference set} in $\zed_{21}$ (for the definition of difference set, see Section
\ref{diffset.sec}).
\end{example}

The second type of AMD code is a {\it strong AMD code}.
It is basically the same as a weak AMD code, except that
 the adversary is given the source
(but not the encoded version of the source) before choosing $\Delta$.

\begin{example}
\label{strongexam}
This example is based on Example \ref{QR.ex}.
Let $\S = \{1,2,3,4\}$ and let $\G = \zed_{7}$.
The encoding function $E$ is defined by
$E(1) = 1$,  $E(2) = 2$, $E(3) = 4$ and $E(4) \in_R \{0,3,5,6\}$ 
(the notation ``$\in_R$'' denotes that the given encoding is to be chosen
uniformly at random from the given set).
If the source $s = 1,2$ or $3$, then the adversary succeeds with 
probability $1$ by choosing $\Delta$ such that $E(s) + \Delta = E(s')$ for some $s' \neq s$. 
However, if the source $s=4$, it can be verified that the adversary's success
probability is $1/2$. To see this, observe for any $\Delta \neq 0$ that $E(4) + \Delta
\in \{E(1),E(2),E(3)\}$ for precisely two of the four possible values of
$E(4)$. 
%Given that the source is chosen uniformly
%at random, the adversary's overall success probability is
%$7/8$.
\end{example}

\subsection{Notation}

In this section, we present  relevant notation that we will use in the rest of the paper.

\begin{itemize}
\item
There is a set $\S$ of plaintext messages which is termed the {\it source space}, where $|\S| = m$.
There will be a probability distribution on $\S$, which is assumed
to be public. We will normally assume $\prob [s] = 1/m$ for all $s \in \S$, so we
have {\em equiprobable sources}.
\item The {\it encoded message space} (or more simply, {\it message space})  is a set $\G$, where $|\G| = n$ 
(note: $\G$ will usually be an additive abelian group with identity $0$).
%\item The {\it rate} of the code is $\log _2 m / \log_2 n$.
%\item {\it Keyspace} $\K$, $|\K| = \ell$.
\item For every source $s \in \S$, let $A(s) \subseteq \G$ denote the 
set of  {\it valid} encodings of $s$.
We require that $A(s) \cap A(s') = \emptyset$ if $s \neq s'$; 
this ensures that any message can be correctly decoded.
Denote $\A = \{ A(s) : s \in \S \}$.
\item Let $a_{s} = |A(s)|$ for  any $s \in \S$.
Define \[\G_0 = \bigcup _{s \in \S} A(s)\] and 
denote \[a = \sum_{s \in \S} a_s.\] If $a_{s}$ is constant, say $k$, 
then the code is {\it $k$-uniform}. In this case, $a = km$.
\item $E : \S \rightarrow G$ is a (possibly randomized) {\it encoding function}
 that maps a source $s \in \S$ to some $g \in A(s)$ 
 according to a certain probability distribution 
 defined on $A(s)$: 
 \[\prob [E(s) = g] = \prob[g \mid s].\]
 The encoding function $E$, as well as the probability
 distributions 
 % $P_{s}$ (where
%\item Suppose that  the probability distribution 
$\prob [E(s) = g]$, %($s \in \S, g \in A(s)$) 
are assumed to be public.
Observe that, for equiprobable sources, the induced probability distribution on 
$\G_0$ is given by 
\[ \prob[g] = \frac{1}{m} \times \prob [E(s) = g]\]
for all $s \in \S$ and all $g \in A(s)$.
\item Formally, we can define the AMD code as a 4-tuple $(\S, \G, \A, E)$.
\item If  $\prob [E(s) = g] = 1/a_s$ 
for every  $s \in \S$ and every $g \in A(s)$, then the code has {\it equiprobable encoding}.
Such a code can be denoted as a 3-tuple $(\S, \G, \A)$.
In  a code with equiprobable sources and equiprobable encoding, we have \[ \prob[g] = \frac{1}{a_s m} \]
for all $s \in \S$ and all $g \in A(s)$.
\item A $k$-uniform code that has equiprobable sources and 
equiprobable encoding is said to be {\it $k$-regular}.
In  a $k$-regular code, we have \[ \prob[g] = \frac{1}{km} \]
for all $g \in \G_0$.
\item A $1$-regular code is said to be {\it deterministic}
because the source uniquely determines the encoding. 
In  a deterministic code with equiprobable sources, we have \[ \prob[g] = \frac{1}{m} \]
for all $g \in \G_0$.

%\item Suppose $\S = ({\zed_2})^{\mu}$ is the set of sources and let
%$\TT =  ({\zed_2})^{\tau}$ be a set of {\it tags}. Define $\G = \S \times \TT$; then
%$|\G| = 2^{\mu + \tau}$. Further, suppose that $A(s) = \{s\} \times \TT$ for all $s$.
%That is, an encoded message consists of the $\mu$-bit source concatenated with a $\tau$-bit tag.
%An AMD code of this form is said to be {\it systematic}. A systematic AMD code is
%$2^{\tau}$-uniform. The rate of this code is $\mu / (\mu + \tau)$.
\end{itemize}

%\section{Weak AMD Codes}

\subsection{Formal Definitions of Weak and Strong AMD Codes}
%\label{weak.sec}

We formally define the notion of {\it weak security} for an AMD code $(\S, \G, \A, E)$ by considering a 
certain game incorporating  an adversary. The adversary  has complete information about the AMD code 
that is being used. Based on this information, the adversary will adopt a \emph{strategy} $\sigma$
which he will use to choose a value $\Delta$ in the game described below. 
A strategy is allowed to be randomized.
%In a weak AMD code, there is only one key, so we omit all
%mention of keys from the following definition.
\begin{definition}[Weak AMD code] \mbox{\quad}\vspace{.1in}\\
Suppose $(\S, \G, \A, E)$ is an AMD code.
\begin{enumerate}
\item The value $\Delta \in \G \setminus \{0\}$ is chosen according to the adversary's strategy.
%according to some probability distribution on $\G \setminus \{0\}$.
\item The source $s \in \S$ is chosen uniformly at random by the encoder
(i.e., we have equiprobable sources).
\item The source is encoded into $g \in A(s)$ using the encoding function $E$.
\item The adversary  wins if and only if $g + \Delta \in A(s')$ for some $s' \neq s$.
\end{enumerate}
The \emph{success probability} of the strategy $\sigma$, denoted 
$\epsilon_{\sigma}$, is the probability that the adversary wins this game using the
specific strategy $\sigma$.

We will say that the code $(\S, \G, \A, E)$ is a \textbf{\textit{weak $(m,n,\he)$-AMD code}} where
$\he$ denotes the success probability of the adversary's optimal 
strategy.
That is, 
\[ \he = \max _{\sigma} \{ \epsilon_{\sigma} \} .\]
\end{definition}

We now turn to the stronger security model.
The following concept of {\it strong security} %also has a single key and it 
is also defined as a game involving an adversary.
In this model, the \emph{strategy} $\sigma$
used to choose $\Delta$ will depend on the source $s$.

\begin{definition}[Strong AMD code] \mbox{\quad}
\begin{enumerate}
\item The source $s \in \S$ is given to the adversary  
(here there is no probability distribution defined on $\S$).
\item The value $\Delta \in \G \setminus \{0\}$ is chosen according to the adversary's strategy.
\item The source is encoded into $g \in A(s)$ using the encoding function $E$.
\item The adversary  wins if and only if $g + \Delta \in A(s')$ for some $s' \neq s$.
\end{enumerate}
For a given source $s$ the \emph{success probability} of the strategy $\sigma$, denoted 
$\epsilon_{\sigma,s}$, is the probability that the adversary wins this game using the
specific strategy $\sigma$.

We will say that the code $(\S, \G, \A, E)$ is a \textbf{\textit{strong $(m,n,\he)$-AMD code}} where
$\he$ denotes the maximum success probability of any strategy over all sources $s$.
That is,
\[ \he = \max _{\sigma,s} \{ \epsilon_{\sigma,s} \} .\]
\end{definition}

As we mentioned earlier, the difference between a weak and strong AMD code
is that, in a weak code, the adversary 
chooses $\Delta$ before he sees $s$, while in a strong code, the adversary  
is given $s$ and then he chooses $\Delta$.

\subsection{Our Contributions}

In this paper, we study \emph{optimal} AMD codes, i.e., codes in which the 
adversary's success probability is as small as possible. We consider bounds
for both weak and strong AMD codes and investigate when these bounds can be
achieved. This involves several generalizations of difference families,
some of which have apparently not been studied previously. 

Connections between AMD codes and difference families have been observed
previously, e.g., in \cite{CFP}. The paper \cite{CFP} and other prior
work is mainly concerned with codes that are ``close to'' optimal and/or 
the construction of classes of codes that have \emph{asymptotically optimal} behaviour. 
This is of course desirable from the point of view of applications. 
In contrast, our focus is on mathematical characterizations of
codes where the relevant bounds are \emph{exactly} met with equality;
this is the sense in which we are using the term ``optimal''.

The rest of this paper is organized as follows.
In Section \ref{diffset.sec}, we define all the generalizations of difference
families that we will be using in the rest of the paper. We give some
examples and constructions as well as prove 
some nonexistence results. Section \ref{weak.sec} studies weak AMD codes.
Bounds are considered in Section \ref{weakbounds.sec}, where we introduce the
notion of \emph{R-optimal} and \emph{G-optimal} AMD codes; these bound arise
in the analysis of two different adversarial strategies. Conditions under which
these bounds can be met with equality are presented in Section \ref{weakoptimal.sec}.
Section \ref{strong.sec} provides an analogous treatment of strong AMD codes.
Finally, we conclude the paper in Section \ref{conclusion.sec}.

\section{Difference Families and Generalizations}
\label{diffset.sec}

In this section, we describe several variations of difference sets and
difference families. These concepts will be essential for constructions and
combinatorial characterizations of optimal (strong and weak) AMD codes.
Some of the definitions we give are new, and we prove some interesting
connections between various types of difference families that may be
of independent interest.

Let $\G$ be an abelian group.
For any two disjoint sets $A_1,A_2 \subseteq \G$, define 
\[ \D(A_1,A_2) =
\{ x-y : x \in A_1, y \in A_2\}.\]
Note that $\D(A_1,A_2)$ is a multiset.  Also, for any $A_1 \subseteq \G$, 
define 
\[ \D(A_1) = \{ x-y: x,y \in A_1, x \neq y\}.\]
$\D(A_1)$ is also a multiset.

%Define $\epsilon_{\mathrm{min}} = a(m-1)/(m(n-1))$.
%Note that $\epsilon_{\mathrm{min}} = k(m-1)/(n-1)$ if
%the code is $k$-uniform.
%We can characterize certain ``optimal'' AMD codes in terms of  
%{\it external difference families}.

Our first two definitions---difference sets and difference families---are standard.
There is a large literature on these combinatorial structures.

\begin{definition}[Difference Set]
%Suppose $D = \{x_1, \dots , x_m\}$ is an {\bf $(n,m,\lambda)$-difference set} in $\G$.
Let $\G$ be an additive abelian group of order $n$.
An \textbf{\textit{$(n,m,\lambda)$-difference set}} (or \textbf{\textit{$(n,m,\lambda)$-DS}})
is a set  $A_1 \subseteq \G$, such that
the following multiset equation holds:
\[ \D(A_1) = \lambda (\G \setminus \{0\}).\]
\end{definition}
If an $(n,m,\lambda)$-DS exists, then $\lambda (n-1) = m (m-1)$.

\medskip

\noindent{\bf Remark:}  We can consider any set of size 1 to be a (trivial) difference set with
$\lambda = 0$.

%\medskip

%\noindent{\bf Remark:}  Given an $(n,m,\lambda)$-DS, say $A = \{x_1, \dots , x_m\}$,
%we can construct a deterministic AMD code 
%for a set of $m$ possible sources, $\S = \{s_1, \dots , s_m\}$.
%The encoding function is defined by $E(s_i) = x_i$, $1 \leq i \leq m$. 
%It is not hard to see that this is a weak $(m,n,\epsilon)$-AMD code, 
%where $\epsilon = \lambda / m = (m-1)/(n-1)$.

\begin{definition}[Difference Family]
Let $\G$ be an additive abelian group of order $n$.
    An \textbf{\textit{$(n,m,k,\lambda)$-difference family}} (or \textbf{\textit{$(n,m,k,\lambda)$-DF}})
is a set of $m$ %(not necessarily disjoint) 
$k$-subsets of $\G$, say $A_1, \dots , A_m$, such that
the following multiset equation holds:
\[ \bigcup _{i}^{}\D(A_i)  = \lambda (\G \setminus \{0\}).\]
\end{definition}
If an $(n,m,k,\lambda)$-DF exists, then $\lambda (n-1) = m k (k-1)$.
Also, an $(n,m,\lambda)$-DS is an $(n,1,m,\lambda)$-DF.

%\noindent{\bf Remark:}  In general, it is not required that the sets in a difference family be disjoint.
%However, for the applications considered in this paper, we will only use difference families
%consisting of disjoint sets. 

The following definition is from  \cite{OKSS}.

\begin{definition}[External difference family]
Let $\G$ be an additive abelian group of order $n$.
An \textbf{\textit{$(n,m,k,\lambda)$-external difference family}} (or \textbf{\textit{$(n,m,k,\lambda)$-EDF}})
is a set of $m$ disjoint $k$-subsets of $\G$, say $A_1, \dots , A_m$, such that
the following multiset equation holds:
\[ \bigcup _{\{ i,j : i \neq j\} }^{}\D(A_i,A_j)  = \lambda (\G \setminus \{0\}).\]
\end{definition}
If an $(n,m,k,\lambda)$-EDF exists, then $n \geq mk$ and 
\begin{equation}
\label{EDF.eq}
\lambda (n-1) = k^2 m (m-1).
\end{equation}
Also, an $(n,m,1,\lambda)$-EDF is the same thing as an $(n,m,\lambda)$ difference set.

\medskip

There are several papers giving construction methods for  external difference families,
e.g., \cite{CD,CJX,FMY,FT,HW09,LF,Ton}.
Here is an example of one infinite class of external difference families.
due to Tonchev \cite{Ton}; it was later 
rediscovered in \cite{HW09}.

\begin{theorem}
\label{ton.thm}
\cite{Ton,HW09}
Suppose that $q = 2u\ell +1$ is a prime power, where $u$ and $\ell$ are odd.
Then there exists a $(q,u,\ell,(q-2\ell-1)/4)$-EDF in $\eff_q$.
\end{theorem}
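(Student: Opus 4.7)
The plan is to construct the EDF using cyclotomic classes of order $2u$ in $\eff_q^*$, and to verify the difference property via Gauss sums. Let $\alpha$ be a primitive element of $\eff_q$, let $H = \langle \alpha^{2u}\rangle$ be the unique subgroup of order $\ell$, and let $C_i = \alpha^i H$ for $0 \le i \le 2u-1$ be the cyclotomic classes of order $2u$. I take the blocks to be
$$A_i := C_{2i} \qquad (0 \le i \le u-1),$$
i.e., the $u$ classes contained in the subgroup $B = \langle \alpha^2\rangle$ of quadratic residues. Each block has size $\ell$, the blocks are pairwise disjoint, and the total count of external differences is $u(u-1)\ell^2 = \lambda(q-1)$ for $\lambda = (u-1)\ell/2 = (q-2\ell-1)/4$, matching the required parameters.

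Since each $A_i$ is $H$-invariant under multiplication, the multiplicity with which any $g \in \eff_q^*$ appears in the external-difference multiset depends only on the cyclotomic class of $g$. The trivial character equality is already ensured by the parameter count above; by Fourier inversion on $(\eff_q,+)$ the required identity $\bigcup_{i \neq j}\D(A_i,A_j) = \lambda(\eff_q\setminus\{0\})$ is therefore equivalent to
$$|\psi(B)|^2 \;-\; \sum_{i=0}^{u-1}|\psi(A_i)|^2 \;=\; -\lambda \qquad (\ast)$$
for every nontrivial additive character $\psi:\eff_q\to\mathbb{C}^*$, where $\psi(S):=\sum_{x\in S}\psi(x)$ and I have used $\sum_i \psi(A_i) = \psi(B)$.

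For the first term of $(\ast)$ the parity hypotheses enter crucially. Since $\ell$ is odd, $-1 = \alpha^{u\ell} \in C_u$; since $u$ is odd, $C_u$ is an odd-indexed class, so $-1$ is a non-residue and $q \equiv 3 \pmod 4$. Hence $B$ is the QR subgroup, its indicator equals $(1+\eta)/2$ on $\eff_q^*$ (with $\eta$ the quadratic character), and $\psi(B) = \frac{1}{2}(-1 + G(\eta))$ where $G(\eta)$ denotes the quadratic Gauss sum. Because $q \equiv 3 \pmod 4$, $G(\eta)$ is purely imaginary of modulus $\sqrt q$, forcing $|\psi(B)|^2 = (q+1)/4$. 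For the second term, let $\chi$ be a multiplicative character of $\eff_q^*$ of exact order $2u$ and set $\zeta = \chi(\alpha)$; orthogonality of the $2u$ characters trivial on $H$ yields
$$\psi(C_i) = \frac{1}{2u}\sum_{j=0}^{2u-1}\zeta^{-ij}\,G(\chi^j),$$
and Plancherel on $\zed/2u\zed$, combined with $|G(\chi^j)|^2 = q$ for $j \neq 0$ and $G(\chi^0) = -1$, gives $\sum_{i=0}^{2u-1}|\psi(C_i)|^2 = ((2u-1)q+1)/(2u)$. The identities $|\psi(-C_{2i})|^2 = |\psi(C_{2i})|^2$ and $-C_{2i} = C_{2i+u}$ (with $u$ odd) show that even- and odd-indexed classes contribute equally, so $\sum_{i=0}^{u-1}|\psi(A_i)|^2 = ((2u-1)q+1)/(4u)$. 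Substituting into $(\ast)$ and simplifying with $q = 2u\ell+1$ produces $-(u-1)\ell/2 = -\lambda$, completing the argument.

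The main obstacle is the parity bookkeeping: both $u$ odd and $\ell$ odd are essential. The assumption that $\ell$ is odd places $-1$ in $C_u$, which is needed both to identify $B$ with the QR subgroup and to guarantee that negation swaps even- and odd-indexed cyclotomic classes (enabling the equal split in the Plancherel calculation). The assumption that $u$ is odd forces $q \equiv 3 \pmod 4$, which is precisely what makes the quadratic Gauss sum purely imaginary, so that $|\psi(B)|^2 = (q+1)/4$ is independent of $\psi$. If either parity hypothesis fails, one of these steps breaks down and the cyclotomic construction no longer yields a uniform EDF.
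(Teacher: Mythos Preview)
Your construction is the same cyclotomic one the paper uses (indeed, your version, with the subgroup $H$ of order~$\ell$ and the $u$ even-indexed cosets, is the one that actually matches the stated parameters $(q,u,\ell,\lambda)$; the paper's sketch has the roles of $u$ and $\ell$ interchanged in the subgroup/coset count). Where you differ is that the paper gives no verification at all---it simply names the blocks and cites \cite{Ton,HW09}---whereas you supply a complete character-sum proof. Your argument is correct: the Fourier criterion $(\ast)$ is the right reformulation, the computation $|\psi(B)|^2=(q+1)/4$ via the purely imaginary quadratic Gauss sum (using $q\equiv 3\pmod 4$, which is exactly the condition that $u\ell$ be odd) is valid for every nontrivial additive character since replacing $\psi$ by $\psi_a$ multiplies $G(\eta)$ by $\eta(a)=\pm 1$, the Plancherel evaluation of $\sum_i|\psi(C_i)|^2$ is standard, and the even/odd halving via $-C_{2i}=C_{2i+u}$ (using $\ell$ odd to place $-1$ in $C_u$ and $u$ odd to make $2i+u$ odd) is a clean way to isolate $\sum_i|\psi(A_i)|^2$. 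The final arithmetic checks out: $(q+1)/4-((2u-1)q+1)/(4u)=-(u-1)(q-1)/(4u)=-(u-1)\ell/2=-\lambda$. So your proposal is a genuinely more detailed, self-contained proof of the same construction; what the paper buys by citing is brevity, what your approach buys is an explicit explanation of exactly where each parity hypothesis is consumed.
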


\begin{proof}
Let $\alpha \in \eff_q$ be a primitive element. Let $C$ be the subgroup
of ${\eff_q}^*$ having order $u$ and index $2\ell$. The $\ell$ cosets
$\alpha^{2i}C$ ($0 \leq i \leq \ell-1$) form the EDF.
\end{proof}

\begin{example}
We give an example to illustrate Theorem \ref{ton.thm}.
Let $\G = (\zed_{19}, +)$. Then $\alpha = 2$ is a primitive element and
$C = \{ 1,7,11\} $ is the (unique) subgroup of order 3 in ${\zed_{19}}^*$.
A $(19,3,3,3)$-EDF is given by the three sets $\{ 1,7,11\}$, $\{ 4,9,6\}$ and $\{ 16,17,5\}$.
\end{example}

We refer to \cite[Table II]{FT} for a list of known external difference families.

\medskip

\noindent{\bf Remark:}  The related but more general concept of a {\it difference system of sets} was
defined much earlier, by Levenshtein, in \cite{Lev}. This is similar to the
definition of an external difference family, except that every difference 
$x-y$  ($x \in A_i, y \in A_j, i \neq j$) is required to occur 
{\it at least} $\lambda$ times. However, we note that
a {\it perfect, regular difference system of sets} is
equivalent to an external difference family.

\medskip

As we will discuss later, 
for the applications to AMD codes we will be 
considering, it is sufficient that every difference
occurs {\it at most} $\lambda$ times. This motivates the following definition.

\begin{definition}[Bounded external difference family]
Let $\G$ be an additive abelian group of order $n$.
A \textbf{\textit{$(n,m,k,\lambda)$-bounded external difference family}} 
(or \textbf{\textit{$(n,m,k,\lambda)$-BEDF}})
is a set of $m$ disjoint $k$-subsets of $\G$, say $A_1, \dots , A_m$, such that
the following condition holds for every $g \in \G \setminus \{0\}$:
\[ |\{ x-y: x-y = g, x \in A_i, y \in A_j,  i \neq j\}|  \leq \lambda .\]
\end{definition}
It is obvious that an $(n,m,k,\lambda)$-EDF is an $(n,m,k,\lambda)$-BEDF.

\begin{definition}[Strong external difference family]
Let $\G$ be an additive abelian group of order $n$.
An \textbf{\textit{$(n,m,k;\lambda)$-strong external difference family}}
(or \textbf{\textit{$(n,m,k;\lambda)$-SEDF}})
is a set of $m$ disjoint $k$-subsets of $\G$, say $A_1, \dots , A_m$, such that
the following multiset equation holds for every $i$, $1 \leq i \leq m$:
\begin{equation}
\label{SEDF.eq}  \bigcup _{\{ j : j \neq i\} }^{}\D(A_i,A_j)   = \lambda (\G \setminus \{0\}).
\end{equation}
\end{definition}

It is easy to see that a $(n,m,k,\lambda)$-SEDF
is an $(n,m,k,m\lambda)$-EDF. Therefore, from (\ref{EDF.eq}), if an $(n,m,k,\lambda)$-SEDF exists, then 
\begin{equation}
\label{SEDF-nc.eq}
\lambda (n-1) = k^2 (m-1).
\end{equation}

\begin{example}
\label{exam10}
Let $\G = (\zed_{k^2+1}, +)$, 
$A_1 = \{0, 1, \dots, k-1\}$ and $A_2 = \{k, 2k, \dots, k^2\}$.
This is a $(k^2+1,2;k;1)$-SEDF.
\end{example}

\begin{example}
\label{trivial-SEDF}
Let $\G = (\zed_{n}, +)$ and
$A_i = \{i\}$ for $0 \leq i \leq n-1$.
This is a $(n,n;1;1)$-SEDF.
\end{example}

\begin{theorem}
\label{SEDF-nonexistence}
There does not exist an $(n,m,k,1)$-SEDF with $m \geq 3$ and $k > 1$.
\end{theorem}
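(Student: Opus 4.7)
The plan is to apply character sums on the abelian group $\G$. I would assume for contradiction that an $(n,m,k,1)$-SEDF $A_1,\ldots,A_m$ exists with $m \ge 3$ and $k \ge 2$, and for each character $\chi$ of $\G$ set $b_i = b_i(\chi) = \sum_{a \in A_i} \chi(a)$ and $s = s(\chi) = \sum_{i=1}^m b_i$.  Pairing the multiset identity (\ref{SEDF.eq}) with a nontrivial $\chi$ (using $\sum_{g \in \G}\chi(g) = 0$ and $\chi(-y) = \overline{\chi(y)}$) yields $b_i(\overline{s}-\overline{b_i}) = -1$ for each $i$, which I would rewrite as
\[
|b_i|^2 = b_i\,\overline{s} + 1 \qquad (1 \le i \le m).
\]

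I would then split into two cases.  \emph{Case 1:} $s(\chi) = 0$ for every nontrivial $\chi$.  Then by Fourier inversion the indicator of $A_1 \cup \cdots \cup A_m$ is constant on $\G$; since the $A_i$ are nonempty, this forces $A_1 \cup \cdots \cup A_m = \G$, hence $mk = n = k^2(m-1)+1$, so $k\bigl(m-k(m-1)\bigr) = 1$ and $k = 1$, contradicting $k \ge 2$.  \emph{Case 2:} some nontrivial $\chi$ has $s \ne 0$.  Reality of $|b_i|^2$ together with $\overline{s} \ne 0$ forces $b_i = c_i\,s$ for some $c_i \in \mathbb{R}$; substituting gives $c_i^2 - c_i - 1/|s|^2 = 0$, so each $c_i$ equals one of the real roots $c_\pm = (1 \pm r)/2$ with $r = \sqrt{1 + 4/|s|^2} > 1$.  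Let $p$ count the indices with $c_i = c_+$; the constraints $c_+ + c_- = 1$ and $\sum_i c_i = 1$ (from $\sum_i b_i = s$) combine to give $r(2p - m) = 2 - m$, whence
\[
r = \frac{m-2}{m-2p}, \qquad |s|^2 = \frac{(m-2p)^2}{(p-1)(m-1-p)},
\]
with $r > 1$ forcing $2 \le p < m/2$.

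The decisive observation is that $|s|^2 = s\overline{s}$ is an algebraic integer (the product of two elements of $\zed[\zeta_n]$) and rational by the displayed formula, so $|s|^2 \in \zed_{>0}$.  Writing $a = p-1$ and $b = m-1-p$ (both positive integers), the integrality becomes $ab \mid (b-a)^2$, equivalently $ab \mid a^2 + b^2$.  A short $\gcd$-argument---set $d = \gcd(a,b)$, $a = da'$, $b = db'$ with $\gcd(a',b') = 1$; then $a' \mid b'^2$ forces $a' = 1$ and symmetrically $b' = 1$---shows $a = b$, so $p = m/2$.  But then $(m-2p)^2 = 0$, giving $|s|^2 = 0$ and hence $s = 0$, contradicting the choice of $\chi$.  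I expect the main obstacle is justifying cleanly that $|s|^2$ is a rational integer (via $\mathbb{Q} \cap \zed[\zeta_n] = \zed$); once this is in hand, the elementary divisibility lemma closes the argument uniformly for all $m \ge 3$ and $k \ge 2$.
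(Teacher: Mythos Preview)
Your argument is correct, but it takes a genuinely different and considerably heavier route than the paper's. The paper's proof is a five-line combinatorial trick: summing the SEDF identities and subtracting the two that involve $A_1$ shows that the external differences among $A_2,\ldots,A_m$ alone cover each nonzero element exactly $m-2 \ge 1$ times; hence any internal difference $x-y$ with distinct $x,y \in A_1$ (available since $k>1$) equals some $u-v$ with $u \in A_i$, $v \in A_j$, $i,j \ge 2$, $i \ne j$, and then $u-x = v-y$ produces two distinct external-difference representations of one nonzero element, contradicting $\lambda = 1$. Your approach instead passes to characters, obtains $|b_i|^2 = b_i\overline{s}+1$, and closes via a Vieta/counting argument together with the fact that $|s|^2$, being a rational algebraic integer, lies in $\zed$ (this step is entirely standard, so the ``main obstacle'' you flag is not really one). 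Two minor points worth making explicit in your write-up: you need $b_i \ne 0$ before concluding $b_i/s \in \mathbb{R}$, which follows immediately from $|b_i|^2 = b_i\overline{s}+1$; and for $m \in \{3,4\}$ the range $2 \le p < m/2$ is already empty, so Case~2 ends there without the divisibility lemma. What your approach buys is generality: the identity $|b_i|^2 = b_i\overline{s}+\lambda$ is exactly the starting point for the strong restrictions on $(n,m,k,\lambda)$-SEDFs with $m \ge 3$ and general $\lambda$ obtained in later work, where no analogue of the paper's elementary trick is available.
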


\begin{proof}
Suppose $A_1, \dots , A_m$ is an $(n,m,k,1)$-SEDF with $m \geq 3$ and $k > 1$.
From (\ref{SEDF.eq}), it follows that
\begin{equation}
\label{SEDF2.eq}  \bigcup _{\{ i,j: 1 \leq  i \leq m, 1 \leq j \leq m, i \neq j\} }^{}\D(A_i,A_j)   
= m (\G \setminus \{0\}).
\end{equation}
Then, from (\ref{SEDF.eq}) and (\ref{SEDF2.eq}), we have
\begin{equation}
\label{SEDF3.eq}  \bigcup _{\{ i,j: 2 \leq  i \leq m, 2 \leq j \leq m, i \neq j\} }^{}\D(A_i,A_j)   
= (m-2) (\G \setminus \{0\}).
\end{equation}
Suppose $x,y \in A_1$, $x \neq y$ (note that $k >1$ so we have two distinct elements in $A_1$).
Now, from (\ref{SEDF3.eq}), since $m > 2$, there exists $u \in A_i$, $v \in A_j$ such
that $i,j > 1$, $i \neq j$ and $u - v = x - y$.
Then $u-x = v-y$, which contradicts (\ref{SEDF.eq}).
\end{proof}

\begin{theorem}
\label{SEDF-existence}
There  exists an $(n,m,k,1)$-SEDF
if and only if $m=2$ and $n = k^2+1$, or
$k = 1$ and $m = n$.
\end{theorem}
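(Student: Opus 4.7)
The plan is to combine the necessary conditions already established---namely the counting identity (\ref{SEDF-nc.eq}) and the nonexistence result of Theorem \ref{SEDF-nonexistence}---with the two explicit constructions (Examples \ref{exam10} and \ref{trivial-SEDF}) to obtain a characterization that is essentially a direct case analysis.

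For sufficiency, I would simply point to the two families of examples already in hand. Example \ref{exam10} exhibits a $(k^2+1,2,k,1)$-SEDF in $\zed_{k^2+1}$ for every $k\geq 1$, covering the case $m=2$, $n=k^2+1$. Example \ref{trivial-SEDF} exhibits an $(n,n,1,1)$-SEDF in $\zed_n$ for every $n$, covering the case $k=1$, $m=n$. So both listed cases actually occur.

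For necessity, I would assume an $(n,m,k,1)$-SEDF exists and split into cases according to $k$. If $k>1$, then Theorem \ref{SEDF-nonexistence} rules out $m\geq 3$, so $m\leq 2$; the value $m=1$ must be excluded by observing that (\ref{SEDF.eq}) would reduce the left-hand side to the empty multiset, forcing $\G\setminus\{0\}=\emptyset$ and hence $n=1$, which is incompatible with $k>1$ (one needs $n\geq mk$ for $m$ disjoint $k$-subsets to exist). Hence $m=2$, and then the counting equation (\ref{SEDF-nc.eq}) gives $n-1=k^2$, as required. If $k=1$, then (\ref{SEDF-nc.eq}) directly yields $n-1=m-1$, so $m=n$.

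There is no real obstacle here; the one subtlety is simply being careful to dispose of the degenerate case $m=1$ (for $k>1$) so that the only surviving possibilities are the two listed in the theorem, and to check that when $k=1$ and $m=2$ the two cases are consistent (both give $n=2$).
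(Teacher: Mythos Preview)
Your proposal is correct and follows essentially the same approach as the paper: invoke Theorem~\ref{SEDF-nonexistence} to eliminate $m\geq 3$ with $k>1$, use the counting identity~(\ref{SEDF-nc.eq}) to pin down $n$ in each remaining case, and cite Examples~\ref{exam10} and~\ref{trivial-SEDF} for sufficiency. If anything, you are slightly more careful than the paper in explicitly disposing of the degenerate case $m=1$.
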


\begin{proof}
From Theorem  \ref{SEDF-nonexistence}, we only need to consider the cases
$m = 2$ and $k=1$. If $m=2$, then from (\ref{SEDF-nc.eq}), we must have $n = k^2+1$, and the relevant SEDF
exists from Example \ref{exam10}. If $k=1$,  then from (\ref{SEDF-nc.eq}) we must have $m=n$, and the relevant 
SEDF exists from Example \ref{trivial-SEDF}.
\end{proof}

Next, we consider generalizations of external difference families and
strong external difference families in which the subsets $A_1, \dots , A_m$
are allowed to be of possibly different sizes.

\begin{definition}[Generalized external difference family]
Let $\G$ be an additive abelian group of order $n$.
An \textbf{\textit{$(n,m;k_1, \dots , k_m;\lambda)$-generalized external difference family}} 
(or \textbf{\textit{$(n,m;k_1, \dots , k_m;\lambda)$-GEDF}})
is a set of $m$ disjoint subsets of $\G$, say $A_1, \dots , A_m$, such that
$|A_i| = k_i$ for $1 \leq i \leq m$ and 
the following multiset equation holds:
\[ \bigcup _{\{ i,j : i \neq j\} }^{}\D(A_i,A_j)   = \lambda (\G \setminus \{0\}).\]
\end{definition}
Clearly, an $(n,m,k,\lambda)$-EDF is an $(n,m;k, \dots , k;\lambda)$-GEDF.

\begin{example}
\label{exam13}
Let $\G = (\zed_{13}, +)$, 
$A_1 = \{0,1\}$ and $A_2 = \{2,4,6\}$.
This is a $(13,2;2,3;1)$-GEDF.
\end{example}

\begin{example}
Let $\G = (\zed_{11}, +)$, 
$A_1 = \{0\}$, $A_2 = \{1\}$, and $A_3 = \{3,5\}$.
This is a $(11,3;1,1,2;1)$-GEDF.
\end{example}

\medskip

\noindent{\bf Remark:} A generalized external difference family is also known as 
a {\it perfect difference system of sets}.

\begin{definition}[Generalized strong external difference family]
Let $\G$ be an additive abelian group of order $n$.
An \textbf{\textit{$(n,m;k_1, \dots , k_m;\lambda_1, \dots , \lambda_m)$-generalized strong external difference family}} 
(or \textbf{\textit{$(n,m;k_1, \dots , k_m;\lambda_1, \dots , \lambda_m)$-GSEDF}})
is a set of $m$ disjoint subsets of $\G$, say $A_1, \dots , A_m$, such that
$|A_i| = k_i$ for $1 \leq i \leq m$ and 
the following multiset equation holds for every $i$, $1 \leq i \leq m$:
\[ \bigcup _{\{ j : j \neq i\} }^{}\D(A_i,A_j)   = \lambda_i (\G \setminus \{0\}).\]
\end{definition}
It is obvious that an $(n,m,k,\lambda)$-SEDF is an 
$(n,m;k, \dots , k;\lambda, \dots , \lambda)$-GSEDF.

\begin{example}
Let $\G = (\zed_{n}, +)$, 
$A_1 = \{0\}$ and $A_2 = \{1, 2, \dots , n-1\}$.
This is a $(n,2;1,n-1;1,1)$-GSEDF.
\end{example}

\begin{example}
\label{QR.ex}
Let $\G = (\zed_{7}, +)$, 
$A_1 = \{1\}$, $A_2 = \{2\}$, $A_3 = \{4\}$, and $A_4 = \{0,3,5,6\}$.
This is a $(7,4;1,1,1,4;1,1,1,2)$-GSEDF.
\end{example}

A $(n,m;k_1, \dots , k_m;\lambda_1, \dots , \lambda_m)$-GSEDF is {\it maximal} if 
$\sum k_i = n$.  Here is a nice characterization of maximal GSEDF.

\begin{theorem}
\label{DS-GSEDF.thm}
Suppose $A_1, \dots , A_m$ is a partition of $\G$ (where $|\G| = n$) with $|A_i| = k_i$ for $1 \leq i \leq m$.
Then $A_1, \dots , A_m$ is a (maximal) $(n,m;k_1, \dots , k_m;\lambda_1, \dots , \lambda_m)$-GSEDF
if and only if $A_i$ is an $(n,k_i,k_i-\lambda_i)$-DS in $\G$, for $1 \leq i \leq m$.
\end{theorem}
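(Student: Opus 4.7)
The plan is to exploit the partition structure: since $\bigcup_j A_j = \G$, for a fixed $A_i$ the ``external'' difference multiset $\bigcup_{j\neq i}\D(A_i,A_j)$ together with the ``internal'' difference multiset $\D(A_i)$ account for \emph{all} differences $x-y$ with $x \in A_i$ and $y \in \G$, $y\neq x$. So the GSEDF condition on $A_i$ and the DS condition on $A_i$ differ only by a fixed constant, which we can read off.

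Concretely, I would fix $i$ and $g \in \G\setminus\{0\}$ and count pairs $(x,y)$ with $x\in A_i$, $y\in\G$, and $x-y=g$. Since $y=x-g$ is uniquely determined by $x$, and $y\neq x$ because $g\neq 0$, there are exactly $k_i$ such pairs. Partitioning according to whether $y\in A_i$ or $y\in A_j$ for some $j\neq i$ gives the identity (of multiplicities of $g$)
\[
\bigl|\{(x,y):x,y\in A_i, x-y=g\}\bigr| + \sum_{j\neq i}\bigl|\{(x,y):x\in A_i, y\in A_j, x-y=g\}\bigr| = k_i.
\]
Equivalently, as multisets on $\G\setminus\{0\}$,
\[
\D(A_i) \;+\; \bigcup_{j\neq i}\D(A_i,A_j) \;=\; k_i(\G\setminus\{0\}).
\]

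From here both directions are immediate. If each $A_i$ is an $(n,k_i,k_i-\lambda_i)$-DS then $\D(A_i)=(k_i-\lambda_i)(\G\setminus\{0\})$, so the displayed identity forces $\bigcup_{j\neq i}\D(A_i,A_j)=\lambda_i(\G\setminus\{0\})$, which is exactly the GSEDF condition (and the $A_i$ being disjoint is built into being a partition). Conversely, if $A_1,\dots,A_m$ is a GSEDF with parameters $\lambda_1,\dots,\lambda_m$, the identity gives $\D(A_i)=(k_i-\lambda_i)(\G\setminus\{0\})$ for each $i$, which is the definition of an $(n,k_i,k_i-\lambda_i)$-DS.

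I do not anticipate a real obstacle here: the only subtlety is the correct bookkeeping of multisets and the observation that ``$y\notin A_i$'' is the same as ``$y\in A_j$ for a unique $j\neq i$'' precisely because of the partition hypothesis (this is what makes the theorem specific to the maximal case, $\sum k_i=n$). I would be a little careful to note that the trivial $k_i=1$ case is consistent with the ``Remark'' earlier that a singleton is a $(n,1,0)$-DS, so the statement degenerates correctly when some parts are singletons.
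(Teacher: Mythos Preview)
Your proof is correct and essentially identical to the paper's argument. The paper derives the same multiset identity $\D(A_i)\cup\bigcup_{j\neq i}\D(A_i,A_j)=k_i(\G\setminus\{0\})$ via a chain of multiset equalities (writing $\bigcup_{j\neq i}\D(A_i,A_j)=\D(A_i,\G\setminus A_i)$ and then subtracting off $\D(A_i)$), whereas you obtain it by directly counting, for each fixed nonzero $g$, the $k_i$ pairs $(x,x-g)$ with $x\in A_i$; both then read off the equivalence in the same way.
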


\begin{proof}
Fix a value $i$, $1 \leq i \leq m$.  It is clear that 
 \begin{eqnarray*}
 \bigcup _{\{ j : j \neq i\} }^{}\D(A_i,A_j) &=& \D(A_i, \G \setminus A_i)\\
 &=& \bigcup _{x \in A_i} \D(x,\G \setminus A_i)\\
 &=& \bigcup _{x \in A_i} \left( \D(x,\G \setminus \{x\} ) \setminus \D(x,A_i \setminus \{x\})  \right) \\
 &=& \left( \bigcup _{x \in A_i} \D(x,\G \setminus \{x\} ) \right) \setminus 
 \left( \bigcup _{x \in A_i}\D(x,A_i \setminus \{x\})  \right) \\
 &=& \left( \bigcup _{x \in A_i} \G \setminus \{0\} ) \right) \setminus 
 \D(A_i)  \\
 &=& (k_i ( \G \setminus \{0\} ) ) \setminus 
 \D(A_i) , %\\
 %& = & (k_i-\lambda_i) ( \G \setminus \{0\} ) ,
 \end{eqnarray*}
where all operations are multiset operations.
Therefore, 
\[ \bigcup _{\{ j : j \neq i\} }^{}\D(A_i,A_j) =  \lambda_i( \G \setminus \{0\} )\]
if and only if 
\[ \D(A_i) = (k_i-\lambda_i) ( \G \setminus \{0\} ) .\]
\end{proof}

\begin{theorem}
Suppose there exists an $(n,m;k_1, \dots , k_m;\lambda_1, \dots , \lambda_m)$-GSEDF
where $k_i = 1$. Then $\lambda_i = 1$ and $\sum_{i=1}^{m} k_i = n$
(i.e., the GSEDF is maximal).
\end{theorem}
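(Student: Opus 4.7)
The plan is to exploit the fact that when $A_i$ is a singleton, the multiset $\bigcup_{j\neq i}\D(A_i,A_j)$ is forced to be a genuine set (no repeated elements), which then pins down $\lambda_i$.

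First, write $A_i=\{x\}$. Then for every $j \neq i$,
\[\D(A_i,A_j) \;=\; \{x-y : y \in A_j\},\]
and each such difference appears with multiplicity exactly $1$ within $\D(A_i,A_j)$. Since the sets $A_1,\dots,A_m$ are pairwise disjoint, as $j$ ranges over $\{1,\dots,m\}\setminus\{i\}$ the elements $y$ range without repetition over $\G_0\setminus\{x\}$, where $\G_0=\bigcup_{\ell}A_\ell$. Consequently
\[\bigcup_{j\neq i}\D(A_i,A_j) \;=\; \{x-y : y \in \G_0\setminus\{x\}\},\]
and each element of this multiset occurs with multiplicity exactly $1$. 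In particular the total multiset size is $|\G_0|-1=\Bigl(\sum_{\ell}k_\ell\Bigr)-1$.

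Next, I invoke the defining equation of a GSEDF, which requires this multiset to equal $\lambda_i(\G\setminus\{0\})$. Comparing multiplicities, every nonzero element of $\G$ must appear exactly $\lambda_i$ times; since each element appears at most once in our multiset, we get $\lambda_i\leq 1$. Comparing total sizes yields $\lambda_i(n-1)=\bigl(\sum_{\ell}k_\ell\bigr)-1$. Assuming the GSEDF is nontrivial in the sense that $m\geq 2$ (so that there is at least one $j\neq i$, and hence the multiset is nonempty), we must have $\lambda_i\geq 1$, and therefore $\lambda_i=1$.

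Finally, substituting $\lambda_i=1$ into $\lambda_i(n-1)=\bigl(\sum_{\ell}k_\ell\bigr)-1$ gives $\sum_{\ell}k_\ell=n$, i.e., the GSEDF is maximal. There is no real obstacle: the key observation is simply that a singleton $A_i$ forces the associated difference multiset to be a set, which immediately forces $\lambda_i\in\{0,1\}$, and then a size count closes the argument. The only subtlety worth flagging is confirming the degenerate case $m=1$ (a single singleton) is implicitly excluded, since otherwise the conclusion $\lambda_i=1$ would fail trivially.
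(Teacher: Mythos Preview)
Your proof is correct and follows essentially the same line as the paper's: both reduce to the counting identity $a-1=\lambda_i(n-1)$ (where $a=\sum_\ell k_\ell$) coming from $k_i=1$, and then conclude $\lambda_i=1$ and $a=n$. The only minor variation is that you bound $\lambda_i\leq 1$ via the multiplicity-free observation, whereas the paper uses the inequality $a\leq n$ directly; these are equivalent, and your remark about excluding the degenerate case $m=1$ is exactly the implicit assumption $\lambda_i\geq 1$ that the paper also uses.
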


\begin{proof}
We have $k_i (a - k_i) = a-1 = \lambda_i (n-1)$, where $a = \sum_{i=1}^{m} k_i$.
Since $a \leq n$ and $\lambda_i \geq 1$, it must be the case that
$a=n$ and $\lambda_i = 1$.
\end{proof}

\begin{definition}[Bounded generalized strong external difference family]
Let $\G$ be an additive abelian group of order $n$.
An \textbf{\textit{$(n,m;k_1, \dots , k_m;\lambda_1, \dots , \lambda_m)$-bounded generalized strong external difference family}} 
(or \textbf{\textit{$(n,m;k_1, \dots , k_m;\lambda_1, \dots , \lambda_m)$-BGSEDF}})
is a set of $m$ disjoint subsets of $\G$, say $A_1, \dots , A_m$, such that
$|A_i| = k_i$ for $1 \leq i \leq m$ and 
the following multiset equation holds for every $j$, $1 \leq j \leq m$, 
and for every $g \in \G \setminus \{0\}$:
\[ |\{ x-y: x-y = g, x \in A_i, y \in A_j,  i \neq j\}|  \leq \lambda_j .\]
\end{definition}

\noindent{\bf Remark:}  A BGSEDF is equivalent to the notion of \emph{differential structure},
as defined, e.g.,  in \cite{CFP}.

\begin{definition}[Partitioned external difference family]
Let $\G$ be an additive abelian group of order $n$.
An \textbf{\textit{$(n,m;c_1, \dots , c_{\ell};k_1, \dots , k_{\ell};\lambda_1, \dots , \lambda_{\ell})$-partitioned external difference family}}
(or \textbf{\textit{$(n,m;c_1, \dots , c_{\ell};k_1, \dots , k_{\ell};\lambda_1, \dots , \lambda_{\ell})$-PEDF}})
is a set of $m= \sum_i c_i$ disjoint subsets of $\G$, say $A_1, \dots , A_m$, such that
there are $c_h$ subsets of size $k_h$, for $1 \leq h \leq \ell$, and 
the following multiset equation holds for every $h$, $1 \leq h \leq \ell$:
\[ \bigcup _{\{ i : |A_i| = c_h\} }^{} \bigcup _{\{ j : j \neq i\} }^{}\D(A_i,A_j)   = \lambda_i (\G \setminus \{0\}).\]
\end{definition}

We note the following:
\begin{itemize}
\item an $(n,m;k_1, \dots , k_m;\lambda_1, \dots , \lambda_m)$-GSEDF
is an    $(n,m;1, \dots , 1;k_1, \dots , k_{m};\lambda_1, \dots , \lambda_{m})$-PEDF
\item an $(n,m,k,\lambda)$-EDF is an 
         $(n,m;m;k;\lambda)$-PEDF
\item an $(n,m;c_1, \dots , c_{\ell};k_1, \dots , k_{\ell};\lambda_1, \dots , \lambda_{\ell})$-PEDF
is an    $(n,m;{k_1}^{c_1}, \dots , {k_{\ell}}^{c_{\ell}};\lambda)$-GEDF in which
\[ \lambda = \sum _{i=1}^{\ell} \lambda_i,\]
where the notation ${k_i}^{c_i}$ denotes $c_i$ occurrences of $k_i$, for $1 \leq h \leq \ell$.
\end{itemize}
Here is an example of a PEDF that is not an EDF or GSEDF.

\begin{example}
\label{GSSEDF.ex}
Let $\G = (\zed_{13}, +)$, 
$A_1 = \{0,1,4\}$, $A_2 = \{3,5,10\}$, $A_3 =  \{2,6,7,9\}$, $A_4 =  \{8\}$, 
$A_5 =  \{11\}$,  $A_6 =  \{12\} $.
It can be verified that $A_1, \dots , A_6$ is a $(13,6;2,1,3;3,4,1;5,3,3)$-PEDF. 
To see that it is not a GSEDF, we first compute the 
occurrence of differences
from $A_1$ to the union of the other $A_i$'s:
\[ 
\begin{array}{crrrrrrrrrrrr}
\mathrm{difference} & 1 & 2 & 3 & 4 & 5 & 6 & 7 & 8 & 9 & 10 & 11 & 12\\ \hline
\mathrm{frequency} & 2 & 3 & 2 & 2 & 3 & 3 & 3 & 3 & 2 & 2 & 3 & 2
\end{array}
\]
Then we compute the 
occurrence of differences
from $A_2$ to the union of the other $A_i$'s:
\[ 
\begin{array}{crrrrrrrrrrrr}
\mathrm{difference} & 1 & 2 & 3 & 4 & 5 & 6 & 7 & 8 & 9 & 10 & 11 & 12\\ \hline
\mathrm{frequency} & 3 & 2 & 3 & 3 & 2 & 2 & 2 & 2 & 3 & 3 & 2 & 3
\end{array}
\]
These two lists of occurrences of differences are not uniform, so we do not have a GSEDF.
However, each difference occurs a total of five times in the two lists.
\end{example}

\begin{theorem}
Suppose $A_1, \dots , A_m$ is a partition of $\G$ (where $|\G| = n$) such that there 
are $c_h$ subsets of size $k_h$ for $1 \leq h \leq \ell$.
Then $A_1, \dots , A_m$ is a (maximal) 
$(n,m;c_1, \dots , c_{\ell};k_1, \dots , k_{\ell};\lambda_1, \dots , \lambda_{\ell})$-PEDF
if and only if the subsets of cardinally $k_h$ form an  $(n,k_h,c_hk_h-\lambda_h)$-DF in $\G$, 
for $1 \leq h \leq \ell$.
\end{theorem}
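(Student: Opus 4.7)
The plan is to mimic and generalize the multiset computation used in the proof of Theorem~\ref{DS-GSEDF.thm}, but applied to a whole collection of blocks of a fixed size rather than a single block.

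First I would fix an index $h$ with $1 \leq h \leq \ell$ and let $\mathcal{I}_h = \{i : |A_i| = k_h\}$, so $|\mathcal{I}_h| = c_h$. For each individual $i \in \mathcal{I}_h$, since $A_1, \dots, A_m$ is a partition of $\G$, we have $\bigcup_{j \neq i} A_j = \G \setminus A_i$, so
\[
\bigcup_{\{j : j \neq i\}} \D(A_i, A_j) \;=\; \D(A_i, \G \setminus A_i).
\]
I would then re-run exactly the multiset chain of equalities from the proof of Theorem~\ref{DS-GSEDF.thm} to conclude
\[
\D(A_i, \G \setminus A_i) \;=\; k_h(\G \setminus \{0\}) \setminus \D(A_i),
\]
where the operations are multiset operations.

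Next I would take the multiset union over all $i \in \mathcal{I}_h$. Since the $c_h$ blocks of size $k_h$ all contribute $k_h(\G \setminus \{0\})$ on the one hand, and their internal differences on the other, I obtain
\[
\bigcup_{i \in \mathcal{I}_h} \bigcup_{\{j : j \neq i\}} \D(A_i, A_j) \;=\; c_h k_h (\G \setminus \{0\}) \setminus \bigcup_{i \in \mathcal{I}_h} \D(A_i).
\]
By the definition of a PEDF (applied to layer $h$), the left-hand side equals $\lambda_h(\G \setminus \{0\})$ if and only if $A_1,\dots, A_m$ satisfies the PEDF condition at layer $h$. Rearranging, this holds if and only if
\[
\bigcup_{i \in \mathcal{I}_h} \D(A_i) \;=\; (c_h k_h - \lambda_h)(\G \setminus \{0\}),
\]
which is precisely the statement that the $c_h$ blocks of size $k_h$ form an $(n, c_h, k_h, c_h k_h - \lambda_h)$-DF in $\G$. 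Doing this for every $h$ gives the biconditional.

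I do not anticipate any real obstacle: the only slightly delicate point is to be careful that all the unions are being treated as multiset unions (so that the $\D(A_i)$ with $i \in \mathcal{I}_h$ really subtract correctly from $c_h k_h (\G \setminus \{0\})$), and to notice that within each layer the argument decouples completely from the other layers because we only ever use that $\bigcup_{j \neq i} A_j = \G \setminus A_i$, which follows from the partition hypothesis alone and is independent of how the remaining blocks are grouped by size.
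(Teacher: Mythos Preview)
Your proposal is correct and is exactly the approach the paper intends: the paper omits the proof, saying only that it is ``similar to the proof of Theorem~\ref{DS-GSEDF.thm}'', and your argument is precisely the natural extension of that proof, applying the same multiset identity $\D(A_i,\G\setminus A_i)=k_h(\G\setminus\{0\})\setminus\D(A_i)$ to each block of size $k_h$ and then taking the multiset union over $\mathcal{I}_h$. The only cosmetic point is that the paper's DF notation is four-parameter, so your $(n,c_h,k_h,c_hk_h-\lambda_h)$-DF is in fact the correct labeling (the three-parameter version in the theorem statement appears to be a typo).
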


\begin{proof}
We omit the proof, which is similar to the proof of Theorem \ref{DS-GSEDF.thm}.
\end{proof}

\begin{example}
\label{GSSEDF2.ex}
Let's look again at the PEDF in Example \ref{GSSEDF.ex}.
Here the two sets of size 3 form a $(13,2,3,1)$-DF;
the set of size 4 is a $(13,1,4,1)$-DF; and the three sets of size 1 form
a $(13,3,1,0)$-DF.
\end{example}

In Figure \ref{diff-fig}, we indicate the relationship between the
various types of difference families we have defined. 
If we designate $X \rightarrow Y$, this indicates that
any example of ``$X$''  automatically satisfies the properties of ``$Y$''.

\begin{figure}[tb]
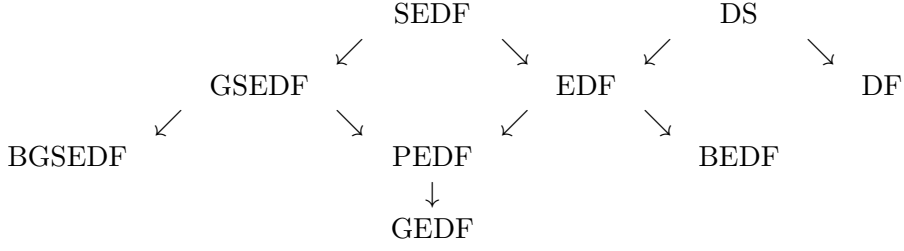

\[
\begin{array}{ccccccccccc}
& &  & & \mathrm{SEDF} & & & &  \mathrm{DS}\\
& &  & \swarrow &  & \searrow & & \swarrow &  & \searrow \\
& & \mathrm{GSEDF} & & & & \mathrm{EDF} & & & & \mathrm{DF}\\ 
& \swarrow &  & \searrow &  & \swarrow & & \searrow &  &  \\
\mathrm{BGSEDF} & &  & & \mathrm{PEDF} & & & &  \mathrm{BEDF}\\
& &  & & \downarrow \\
& &  & & \mathrm{GEDF}
\end{array}
\]
\caption{Relationships between 
various types of difference families}
\label{diff-fig}
\end{figure}

\section{Weak AMD Codes}
\label{weak.sec}

Our goal is to prove lower bounds on the adversary's optimal success probability, $\he$. 
Note that a {\it lower} bound on $\he$ states that
there exists an adversary  who wins the relevant game with {\it at least} some specified
probability. Then we  construct codes 
that meet these lower bounds, i.e., codes in which the adversary cannot succeed
with higher probability.  Whenever possible, we will prove bounds without assuming that
the code is uniform or has equiprobable encoding (we do assume equiprobable sources, however).

\subsection{Bounds for Weak AMD Codes}
\label{weakbounds.sec}

%
%First we look at the case of weak AMD codes.

\begin{theorem}
\label{weak-bound}
In any weak $(m,n,\he)$-AMD code, it holds that \[\he \geq \frac{a(m-1)}{m(n-1)}.\]
\end{theorem}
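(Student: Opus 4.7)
The plan is to prove the bound by analysing the \emph{random-$\Delta$} strategy: the adversary picks $\Delta$ uniformly at random from $\G \setminus \{0\}$. Since the adversary's optimal success probability $\he$ is at least the expected success probability of any particular (possibly randomized) strategy, computing the average success probability of this strategy over all the relevant coin tosses will give a lower bound on $\he$.

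First I would fix a source $s \in \S$ and a valid encoding $g \in A(s)$ and count the number of ``winning'' values of $\Delta$, i.e., values of $\Delta \in \G\setminus\{0\}$ such that $g+\Delta \in A(s')$ for some $s'\neq s$. As $\Delta$ ranges over $\G\setminus\{0\}$, the sum $g+\Delta$ ranges over $\G\setminus\{g\}$. Since the sets $A(s')$ are pairwise disjoint, $g \in A(s)$ is not in any $A(s')$ for $s'\neq s$, so the set of winning sums is exactly $\bigcup_{s'\neq s} A(s') = \G_0 \setminus A(s)$, of size $a - a_s$. Thus for this fixed $(s,g)$ the conditional win probability over uniform $\Delta$ is $(a - a_s)/(n-1)$.

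Next I would average over the encoder's randomness. Assuming equiprobable sources and the (public) encoding distribution $\prob[g \mid s]$, the total success probability of the random-$\Delta$ strategy is
\[
\epsilon_{\rand} \;=\; \sum_{s \in \S} \frac{1}{m} \sum_{g \in A(s)} \prob[g\mid s]\cdot \frac{a - a_s}{n-1}.
\]
The inner probabilities satisfy $\sum_{g \in A(s)} \prob[g\mid s] = 1$, so this collapses to
\[
\epsilon_{\rand} \;=\; \frac{1}{m(n-1)} \sum_{s\in\S} (a - a_s) \;=\; \frac{ma - a}{m(n-1)} \;=\; \frac{a(m-1)}{m(n-1)}.
\]
Since $\he \geq \epsilon_{\sigma}$ for every strategy $\sigma$ (there must exist some concrete $\Delta$ achieving at least the average), this yields the claimed lower bound.

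There is no real obstacle here; the only subtle point is the disjointness argument used to conclude that $g \in A(s)$ does not accidentally appear in some other $A(s')$, which would otherwise reduce the count of winning $\Delta$'s by one. Once that observation is in place, the computation is a direct averaging. The proof does not require uniformity or equiprobable encoding on the code, only the assumption of equiprobable sources, which is consistent with the setup declared before the statement.
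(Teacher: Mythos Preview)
Your proof is correct and follows essentially the same approach as the paper: analyse the uniformly random-$\Delta$ strategy, observe that for fixed $s$ and $g\in A(s)$ the conditional win probability is $(a-a_s)/(n-1)$, and then average over the source and encoding to obtain $\epsilon_{\mathsf{rand}}=a(m-1)/(m(n-1))$. You spell out the disjointness argument for the count $a-a_s$ a bit more explicitly than the paper does, but the structure and conclusion are identical.
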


\begin{proof}
Suppose the adversary   chooses the value $\Delta \in \G \setminus \{0\}$ 
uniformly at random. %(not surprisingly, we call this the {\it random strategy}). 
For any given $g \in A(s)$ and for a randomly chosen 
$\Delta$, the probability that
the adversary  wins is $(a - a_s) / (n-1)$.
The success probability $\epsilon_{\mathsf{rand}}$ of this random strategy $\mathsf{rand}$ is
\begin{eqnarray*}
\epsilon_{\mathsf{rand}} &=& \sum_s \prob[s] \sum_{g \in A(s)} \left( \prob [E(s) = g] \times \frac{a - a_s}{n-1}\right)\\
& = & \sum_s \left( \prob[s] \times \frac{a - a_s}{n-1} \right)\\
& = & \frac{a}{n-1} - \sum_s \frac{a_s}{m(n-1)} \quad \mbox{(because the sources are equiprobable)} \\
& = & \frac{a}{n-1} - \frac{a}{m(n-1)}\\
& = & \frac{a(m-1)}{m(n-1)}.
\end{eqnarray*}
\end{proof}

\begin{corollary}
\label{weak-bound-cor-uniform}
In any $k$-uniform weak $(m,n,\he)$-AMD code, it holds that \[\he \geq \frac{k(m-1)}{n-1}.\]
\end{corollary}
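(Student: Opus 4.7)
The plan is to derive this corollary directly from Theorem \ref{weak-bound} by specializing the quantity $a$ to the $k$-uniform setting. First I would recall that for a $k$-uniform code we have $a_s = k$ for every $s \in \S$, so that
\[
a = \sum_{s \in \S} a_s = km.
\]
Then I would simply substitute $a = km$ into the bound $\he \geq \frac{a(m-1)}{m(n-1)}$ from Theorem \ref{weak-bound} to obtain
\[
\he \geq \frac{km(m-1)}{m(n-1)} = \frac{k(m-1)}{n-1},
\]
which is the desired inequality.

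There is no real obstacle here: the corollary is a one-line consequence of the preceding theorem, and the only ``content'' is recognizing that $k$-uniformity collapses the sum $\sum_s a_s$ to $km$. Accordingly, I would keep the proof to essentially those two lines rather than re-deriving the random-$\Delta$ strategy argument from scratch.
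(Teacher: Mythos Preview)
Your proposal is correct and matches the paper's own proof essentially verbatim: the paper simply notes that $a = km$ in a $k$-uniform code and applies Theorem~\ref{weak-bound}.
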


\begin{proof}
Note that $a = km$ in a $k$-uniform  code and apply Theorem \ref{weak-bound}.
\end{proof}

\begin{definition}
\label{weak-R-optimal}
We will define a weak AMD code that meets the bound of Theorem \ref{weak-bound} 
(or Corollary \ref{weak-bound-cor-uniform}, in the
case that the code is $k$-uniform) with equality
to be \emph{R-optimal}. Here, ``R'' is used to indicate that $\mathsf{rand}$ is an optimal strategy. 
\end{definition}

\begin{corollary}\cite[Theorem 2.2]{CFP}
\label{weak-bound-cor}
In any weak $(m,n,\he)$-AMD code, it holds that \[\he \geq \frac{m-1}{n-1}.\]
\end{corollary}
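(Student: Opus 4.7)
The plan is to derive this as an immediate consequence of Theorem \ref{weak-bound}, which already gives the bound $\he \geq a(m-1)/(m(n-1))$. So the only thing needed is to show that $a/m \geq 1$, i.e., $a \geq m$.

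First I would recall the definitions: $a = \sum_{s \in \S} a_s$ where $a_s = |A(s)|$, and the sets $A(s)$ are nonempty (since for each source $s$ there must be at least one valid encoding $E(s) \in A(s)$). Hence $a_s \geq 1$ for every $s \in \S$, and summing over the $m$ sources yields $a \geq m$.

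Plugging this into Theorem \ref{weak-bound} gives
\[
\he \;\geq\; \frac{a(m-1)}{m(n-1)} \;\geq\; \frac{m(m-1)}{m(n-1)} \;=\; \frac{m-1}{n-1},
\]
which is the desired inequality. The main ``obstacle'' is really just noticing that $a_s \geq 1$ for all $s$; there is no technical difficulty here since the corollary is obtained from the more refined theorem by the weakest possible estimate on $a$. Equality in this corollary therefore requires equality in Theorem \ref{weak-bound} together with $a_s = 1$ for every source, i.e., the code must be deterministic (1-uniform); this observation is likely the reason the authors isolate the statement as a stand-alone corollary in order to compare with the bound of \cite{CFP}.
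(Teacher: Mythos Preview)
Your proof is correct and matches the paper's approach exactly: the paper's proof simply says ``Note that $a \geq m$ and apply Theorem \ref{weak-bound}.'' Your additional observation about equality forcing the code to be deterministic also mirrors the remark the authors make immediately after the corollary.
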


\begin{proof}
Note that $a \geq m$ and apply Theorem \ref{weak-bound}.
\end{proof}

\noindent{\bf Remark:}  
The bound of Corollary \ref{weak-bound-cor} is met with equality only if 
the code is deterministic.

\medskip

Here is a new bound for weak AMD codes, that arises from a different adversarial strategy.

\begin{theorem}
\label{weak-bound-2}
In any weak $(m,n,\he)$-AMD code, it holds that \[\he \geq \frac{1}{a}.\]
\end{theorem}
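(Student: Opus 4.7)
The plan is to exhibit a single deterministic choice of $\Delta$ whose success probability is at least $1/a$; then $\he$, being the maximum over all strategies, is at least $1/a$. The strategy will be of the form $\Delta = g_2^* - g_1^*$ for two cleverly chosen valid encodings belonging to different sources, and the lower bound on its success probability will follow from a one-line pigeonhole argument applied to the induced distribution on $\G_0$.

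First I would note that $\G_0 = \bigcup_s A(s)$ has exactly $a$ elements, since the $A(s)$ are pairwise disjoint and $a = \sum_s a_s$. The induced distribution on $\G_0$ is
\[
\prob[g] = \frac{1}{m}\,\prob[E(s)=g] \quad \text{for } g \in A(s),
\]
and summing over $g \in \G_0$ yields $1$. By pigeonhole, some $g_1^* \in \G_0$ satisfies $\prob[g_1^*] \geq 1/a$; let $s_1^*$ be the (unique) source with $g_1^* \in A(s_1^*)$.

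Next, assuming $m \geq 2$ (otherwise the adversary cannot win at all and the problem is vacuous), pick any other source $s_2^* \neq s_1^*$ and any $g_2^* \in A(s_2^*)$, and set $\Delta = g_2^* - g_1^*$. Disjointness of $A(s_1^*)$ and $A(s_2^*)$ guarantees $g_1^* \neq g_2^*$, hence $\Delta \in \G \setminus \{0\}$ as required. Whenever the actual encoding happens to equal $g_1^*$, the source is $s_1^*$ and $g_1^* + \Delta = g_2^* \in A(s_2^*)$ with $s_2^* \neq s_1^*$, so the adversary wins. Therefore
\[
\epsilon_{\Delta} \;\geq\; \prob[\text{encoding equals } g_1^*] \;=\; \prob[g_1^*] \;\geq\; \frac{1}{a},
\]
and since $\he \geq \epsilon_\Delta$ for every feasible $\Delta$, we conclude $\he \geq 1/a$.

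There really is no substantive obstacle here; the only subtlety is recognizing that the right averaging space is $\G_0$ (of size $a$) rather than $\G \setminus \{0\}$ (of size $n-1$) used in Theorem \ref{weak-bound}. That change of viewpoint is precisely what makes this bound complementary to the earlier one: the random-$\Delta$ strategy favors large $n-1$, while the "guess-the-most-likely-encoding" strategy above favors small $a$, and the two bounds cross when $n-1 \approx a^2(m-1)/m$.
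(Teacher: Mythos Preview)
Your proof is correct and follows essentially the same approach as the paper's: identify the most likely encoding $\hat{g}$ (which by averaging over the $a$ elements of $\G_0$ has probability at least $1/a$), then choose a $\Delta$ that maps $\hat{g}$ into some other source's encoding set. You are simply more explicit than the paper about how to construct such a $\Delta$ and about the side conditions ($m \geq 2$, $\Delta \neq 0$), but the underlying idea is identical.
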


\begin{proof}
We consider the following strategy $\mathsf{guess}$ for the adversary:
\begin{enumerate}
\item Find the encoding $\hat{g} \in \A$ that occurs with the highest probability.
Observe that $\prob[\hat{g}] \geq 1/a$.
\item Pick a $\Delta$ that will work for the particular encoding $\hat{g}$.
\end{enumerate}
Clearly, the success probability $\epsilon_{\mathsf{guess}}$ of the strategy $\mathsf{guess}$ is 
equal to $\prob[\hat{g}] \geq 1/a$.
\end{proof}

\begin{definition}
\label{weak-G-optimal}
We will define a weak AMD code that meets the bound of Theorem \ref{weak-bound-2} 
with equality
to be \emph{G-optimal}. Here, ``G'' is used to indicate that $\mathsf{guess}$ is an optimal strategy.
\end{definition}

\begin{theorem}
\label{weak-bound-new}
In any weak $(m,n,\he)$-AMD code, it holds that \[\he^2 \geq \frac{m-1}{m(n-1)}.\]
\end{theorem}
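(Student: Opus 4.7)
The plan is to combine the two lower bounds on $\he$ that have already been established in this section, rather than starting a new adversarial analysis from scratch. Theorem \ref{weak-bound} gives $\he \geq \tfrac{a(m-1)}{m(n-1)}$ via the random strategy $\mathsf{rand}$, and Theorem \ref{weak-bound-2} gives $\he \geq \tfrac{1}{a}$ via the strategy $\mathsf{guess}$. The key observation is that although both bounds involve the code parameter $a = \sum_s a_s$, they depend on $a$ in opposite directions: the first bound is increasing in $a$, while the second is decreasing in $a$.

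Since both right-hand sides are nonnegative and both inequalities hold simultaneously for the same quantity $\he$, we may multiply them:
\[
\he^2 \;=\; \he \cdot \he \;\geq\; \frac{a(m-1)}{m(n-1)} \cdot \frac{1}{a} \;=\; \frac{m-1}{m(n-1)}.
\]
The factor $a$ cancels, leaving a bound that depends only on the source-space and message-space sizes. The step $\he^2 \geq X\cdot Y$ from $\he \geq X \geq 0$ and $\he \geq Y \geq 0$ is a routine two-step monotonicity argument ($\he^2 \geq X \he \geq XY$).

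There is really no technical obstacle here; the argument is a one-line consequence of Theorems \ref{weak-bound} and \ref{weak-bound-2}. Conceptually, however, the bound captures an unavoidable trade-off built into the design of a weak AMD code: if the code has many valid encodings per source (large $a$), then a uniformly random $\Delta$ is likely to map some encoding into the valid set of another source, whereas if the code has few valid encodings (small $a$), then the most probable encoding $\hat g$ is sufficiently predictable that $\mathsf{guess}$ succeeds. One cannot simultaneously defeat both attacks, and the geometric mean of the two thresholds is the precise quantitative expression of this tension. An equivalent viewpoint is that $\he \geq \max\!\bigl(\tfrac{a(m-1)}{m(n-1)}, \tfrac{1}{a}\bigr)$, and the designer's best hope is to pick $a$ so that the two quantities coincide, namely $a = \sqrt{m(n-1)/(m-1)}$; at this balance point both bounds equal $\sqrt{(m-1)/(m(n-1))}$, which is exactly the stated inequality.
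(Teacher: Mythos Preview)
Your proof is correct and is exactly the argument the paper gives: multiply the bound $\he \geq a(m-1)/(m(n-1))$ from Theorem~\ref{weak-bound} by the bound $\he \geq 1/a$ from Theorem~\ref{weak-bound-2}, and the factor $a$ cancels. The additional commentary about the trade-off and the balancing value of $a$ is a nice elaboration but not needed for the proof itself.
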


\begin{proof}
Multiply the bounds proven in Theorems \ref{weak-bound} and \ref{weak-bound-2}.
\end{proof}

A code that meets the bound of Theorem \ref{weak-bound-new} with equality is simultaneously 
R-optimal and G-optimal.

%In the case where $\epsilon< 1/m$,  this new bound is tighter than the bound of Corollary \ref{weak-bound-cor}.

\subsection{Optimal Weak AMD Codes}
\label{weakoptimal.sec}

%We will say that an AMD code is {\it optimal} with respect to a certain 
%bound on $\epsilon$ if if meets that bound with equality.
In this section, we consider weak AMD codes that are R-optimal and/or G-optimal.
%Let $\he$ be success probability of the optimal strategy for a given AMD code.
Recall that a weak AMD code is R-optimal if $\he  = a(m-1)/(m(n-1))$ and
it is G-optimal if   $\he  = 1/a$.

\subsubsection{R-Optimal Weak AMD Codes}

%optimal with respect to Corollary \ref{weak-bound-cor-uniform}.
%We derive some general necessary and sufficient conditions for an AMD code to be optimal.
First, we consider R-optimality.
Consider the strategy $g \mapsto g + \Delta$, where $\Delta \neq 0$, and 
let $\epsilon_{\Delta}$ denote the success probability of this strategy.
Clearly, we have
\begin{equation}
\label{goodhat.eq} 
\he = 
\max \{ \epsilon_{\Delta} : \Delta \neq 0\}.
\end{equation}

For any $\Delta \neq 0$, define 
\begin{equation}
\label{good.eq} 
\good(\Delta) = \{ g \in \G_0: g \in A(s) \text{ and } g + \Delta \in A(s'), \text{where } s' \neq s\} .
\end{equation}
$\good(\Delta)$ denotes the set of encodings $g$ under which a substitution
$g \mapsto g + \Delta$ will result in the adversary winning the game.

\begin{lemma}
For any $\Delta \neq 0$,  it holds
that
\begin{equation}
\label{goodh2.eq} 
\epsilon_{\Delta} = \sum_{g \in \good(\Delta)} \prob[g].
\end{equation}
\end{lemma}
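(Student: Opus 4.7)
The plan is to unfold the definition of $\epsilon_\Delta$ from the weak AMD game and then exploit the fact that the sets $A(s)$ are disjoint so that each $g \in \G_0$ determines its source uniquely.

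First I would write $\epsilon_\Delta$ as the expected value of the win-indicator over the randomness of the source choice and the encoding. Since the sources are equiprobable and $s$ is encoded to $g$ with probability $\prob[E(s)=g]$, this gives
\[
\epsilon_\Delta \;=\; \sum_{s\in \S}\frac{1}{m}\sum_{g\in A(s)} \prob[E(s)=g]\cdot \mathbb{1}\bigl[g+\Delta \in A(s')\text{ for some }s'\neq s\bigr].
\]

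Next I would observe that because the sets $A(s)$ are pairwise disjoint, every $g\in \G_0$ lies in $A(s)$ for exactly one $s$; call it $s_g$. Consequently the condition ``$g + \Delta \in A(s')$ for some $s' \neq s$'' appearing in the indicator, when $g \in A(s)$, is exactly the condition $g \in \good(\Delta)$ from \eqref{good.eq}. Therefore the inner sum simplifies to a sum over $g \in A(s) \cap \good(\Delta)$, and exchanging the order of summation turns the double sum into a single sum over $g \in \good(\Delta)$, with each $g$ appearing exactly once (contributed by its unique $s_g$).

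Finally I would substitute the identity $\prob[g] = \tfrac{1}{m}\prob[E(s_g)=g]$, valid under equiprobable sources as recorded in the notation section, to collapse the expression to $\sum_{g\in \good(\Delta)} \prob[g]$, yielding \eqref{goodh2.eq}.

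There is no real obstacle here; the step that needs care is the bookkeeping at the transition from the double sum over $(s,g)$ to the single sum over $g$, where disjointness of the $A(s)$ and the definition of $\good(\Delta)$ must be used in tandem so that no $g$ is counted twice and the win-indicator is replaced correctly by membership in $\good(\Delta)$.
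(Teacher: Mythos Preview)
Your proposal is correct and follows the same approach as the paper, which simply notes that $\epsilon_{\Delta} = \prob[\, g \in \good(\Delta)\,] = \sum_{g \in \good(\Delta)} \prob[g]$. You have spelled out in more detail the bookkeeping behind this one-line argument (disjointness of the $A(s)$ and the induced distribution on $\G_0$), but the underlying idea is identical.
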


\begin{proof}
It is clear that 
\begin{eqnarray*} \epsilon_{\Delta} &=& \prob [ g  \in \good(\Delta)]\\
& = & \sum_{g \in \good(\Delta)} \prob[g].
\end{eqnarray*}
\end{proof}

\begin{theorem}
\label{weak-probs}
A weak AMD code is R-optimal 
%$\he = a(m-1)/(m(n-1))$
if and only if $\epsilon_{\Delta} = a(m-1)/(m(n-1))$ 
for all $\Delta \neq 0$.
\end{theorem}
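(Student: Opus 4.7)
The plan is to give a symmetric two-direction argument, where both directions are rather short once one recognizes that the strategy $\mathsf{rand}$ simply \emph{averages} $\epsilon_\Delta$ over $\Delta \neq 0$, so that its success probability coincides with $\he$ precisely when the individual $\epsilon_\Delta$'s are constant.

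For the easy direction ($\Leftarrow$), I would assume that $\epsilon_{\Delta} = a(m-1)/(m(n-1))$ for every nonzero $\Delta$. Then by equation~(\ref{goodhat.eq}) we get $\he = \max_{\Delta\neq 0}\epsilon_\Delta = a(m-1)/(m(n-1))$, which is exactly the definition of R-optimal (Definition~\ref{weak-R-optimal}).

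For the main direction ($\Rightarrow$), I would first observe that since $\mathsf{rand}$ picks $\Delta$ uniformly from $\G\setminus\{0\}$, conditioning on $\Delta$ yields
\[
\epsilon_{\mathsf{rand}} \;=\; \frac{1}{n-1}\sum_{\Delta\neq 0}\epsilon_\Delta.
\]
From the proof of Theorem~\ref{weak-bound} we know $\epsilon_{\mathsf{rand}}=a(m-1)/(m(n-1))$, and R-optimality gives $\he = a(m-1)/(m(n-1))$ as well. Combining these with the upper bound $\epsilon_\Delta \le \he$ for all $\Delta \neq 0$, we get
\[
(n-1)\,\he \;=\; \sum_{\Delta\neq 0}\epsilon_\Delta \;\le\; (n-1)\,\he,
\]
which forces equality in every summand: $\epsilon_\Delta = \he = a(m-1)/(m(n-1))$ for every $\Delta\neq 0$.

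There is no real obstacle here; the content of the theorem is essentially the averaging-equals-max principle. The only thing to be careful about is making sure the identity $\epsilon_{\mathsf{rand}} = \frac{1}{n-1}\sum_{\Delta\neq 0}\epsilon_\Delta$ is clearly justified (either by an explicit conditioning argument on the uniform choice of $\Delta$, or by a direct comparison of the summations appearing in the proof of Theorem~\ref{weak-bound} with (\ref{goodh2.eq})). Once that is done, the rest is a one-line ``average equals maximum implies all terms equal the maximum'' argument.
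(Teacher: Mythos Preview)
Your proposal is correct and follows essentially the same approach as the paper: both arguments establish that the average of the $\epsilon_\Delta$'s equals $a(m-1)/(m(n-1))$ and then use ``maximum equals average implies all terms equal'' to conclude. The only cosmetic difference is that the paper recomputes $\sum_{\Delta\neq 0}\epsilon_\Delta$ directly via a double sum, whereas you recognize this sum as $(n-1)\,\epsilon_{\mathsf{rand}}$ and invoke the computation already done in Theorem~\ref{weak-bound}; you also make the easy $\Leftarrow$ direction explicit, which the paper leaves implicit.
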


\begin{proof}
Suppose we have an R-optimal weak AMD code.
It is not hard to compute
\begin{eqnarray*}
\sum _{\Delta \neq 0} \epsilon_{\Delta} &=& \sum _{\Delta \neq 0} \sum_{g \in \good(\Delta)} \prob[g]\\
&=& \sum_{g \in \G_0} \prob[g] \times | \{\Delta :g \in \good(\Delta)  \}|  \\
&=& \sum _{s \in \S} \sum_{g \in A(s)} \prob[s]\, \prob[E(s) = g] \times | \{\Delta :g \in \good(\Delta)  \}|  \\
&=& \sum _{s \in \S} \prob[s] \sum_{g \in A(s)} \prob[E(s) = g]  (a-a_s)    \\
&=& \sum _{s \in \S} \prob[s]  (a-a_s)    \\
&=& \sum _{s \in \S} \frac{1}{m}  (a-a_s)   \\
&=& \frac{a(m-1)}{m}.
\end{eqnarray*}
Therefore the average of the quantities $\epsilon_{\Delta}$ ($\Delta \neq 0$) is equal to 
$a(m-1)/(m(n-1))$.
In order to have $\he = a(m-1)/(m(n-1))$, it must be the case that
$\epsilon_{\Delta} = a(m-1)/(m(n-1))$ for all $\Delta \neq 0$.
\end{proof}

We next present a method of constructing R-optimal weak AMD codes.

\begin{theorem}
\label{GSEDF-construction}
Suppose there is an $(n,m;k_1, \dots , k_m;\lambda_1, \dots , \lambda_m)$%
-GSEDF.   
Then there is an (R-optimal) weak  $(m,n,a(m-1)/(m(n-1)))$-AMD code,
where $a = \sum_{i=1}^{m} k_i$.
\end{theorem}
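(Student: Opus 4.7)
The plan is to take the $m$ disjoint subsets $A_1,\dots,A_m$ of the GSEDF directly as the encoding sets of a weak AMD code. Specifically, index the source space by $\S=\{s_1,\dots,s_m\}$ with the uniform distribution, let $A(s_i)=A_i$, and use equiprobable encoding so that $\prob[E(s_i)=g]=1/k_i$ for each $g\in A_i$. Disjointness of the $A_i$'s gives unique decodability, and $a=\sum_{i=1}^m k_i$ is the total mass. By Theorem~\ref{weak-probs} and Theorem~\ref{weak-bound}, it suffices to show that the single-$\Delta$ success probability $\epsilon_\Delta$ equals $a(m-1)/(m(n-1))$ for every $\Delta\in\G\setminus\{0\}$; the common value then equals both $\he$ and the R-optimality lower bound.

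Next I would evaluate $\epsilon_\Delta$ via formula (\ref{goodh2.eq}). With equiprobable sources and equiprobable encoding, $\prob[g]=1/(k_im)$ for $g\in A_i$, so splitting $\good(\Delta)$ by which $A_i$ contains $g$ gives
\[\epsilon_\Delta=\sum_{i=1}^m \frac{N_i(\Delta)}{k_im},\qquad N_i(\Delta)=|\{g\in A_i:g+\Delta\in A_j\text{ for some }j\neq i\}|.\]
The quantity $N_i(\Delta)$ is precisely the multiplicity of $\Delta$ in the multiset $\bigcup_{j\neq i}\D(A_j,A_i)$. The GSEDF property is stated in terms of $\bigcup_{j\neq i}\D(A_i,A_j)$, so the one place that requires care---the main bookkeeping obstacle---is observing that $\D(A_j,A_i)=-\D(A_i,A_j)$, so the multiplicity of $\Delta$ in the first multiset equals the multiplicity of $-\Delta$ in the second. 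Since the GSEDF condition forces every nonzero group element to appear exactly $\lambda_i$ times in $\bigcup_{j\neq i}\D(A_i,A_j)$, we conclude $N_i(\Delta)=\lambda_i$ independent of $\Delta$.

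Finally, I would convert the GSEDF parameters into the target expression. Counting the size of $\bigcup_{j\neq i}\D(A_i,A_j)$ in two ways gives $k_i(a-k_i)=\lambda_i(n-1)$, hence $\lambda_i/k_i=(a-k_i)/(n-1)$. Substituting into the formula for $\epsilon_\Delta$ yields
\[\epsilon_\Delta=\frac{1}{m}\sum_{i=1}^{m}\frac{\lambda_i}{k_i}=\frac{1}{m(n-1)}\sum_{i=1}^{m}(a-k_i)=\frac{a(m-1)}{m(n-1)},\]
which is independent of $\Delta$. Combined with Theorem~\ref{weak-bound}, this shows $\he=a(m-1)/(m(n-1))$ and the code is R-optimal. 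The construction, the orientation remark on $\D(A_i,A_j)$ versus $\D(A_j,A_i)$, and the one-line counting identity for $\lambda_i$ are the only substantive ingredients.
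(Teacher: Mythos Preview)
Your proof is correct and follows essentially the same route as the paper: construct the code from the GSEDF with equiprobable encoding, compute $\epsilon_\Delta$ via the counting identity $k_i(a-k_i)=\lambda_i(n-1)$, and obtain the constant value $a(m-1)/(m(n-1))$. Your explicit handling of the orientation issue---that $N_i(\Delta)$ is the multiplicity of $\Delta$ in $\bigcup_{j\neq i}\D(A_j,A_i)$ rather than $\bigcup_{j\neq i}\D(A_i,A_j)$, resolved via $\D(A_j,A_i)=-\D(A_i,A_j)$---is a point the paper's proof passes over silently, so if anything your write-up is slightly more careful.
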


\begin{proof}
Suppose the GSEDF is given by $A_1, \dots , A_m$.
Let $a = \sum_{i=1}^{m} k_i$.
Observe that 
\begin{equation}
\label{GESDF.eq}
k_i(a-k_i) = \lambda_i(n-1)
\end{equation} for $1 \leq i \leq m$.
Let $\S = \{s_1, \dots , s_m\}$ be a set of $m$ sources. For $1 \leq i \leq m$, 
define $A(s_i) = A_i$ and suppose
the encoding function $E(s_i)$ is equiprobable.
We show that $\epsilon_{\Delta} = a(m-1)/(m(n-1))$ for all $\Delta \neq 0$.
We have 
\begin{eqnarray*}
\epsilon_{\Delta} &=& 
\sum_{g \in \good(\Delta)} \prob[g]\\
 & = & \sum_{i=1}^{m} \frac{1}{m} \times \frac{\lambda_i}{k_i}\\
& = & \frac{1}{m} \sum_{i=1}^{m}   \frac{a - k_i}{n-1} \quad \text{from (\ref{GESDF.eq})}\\
& = & \frac{1}{m(n-1)} \sum_{i=1}^{m}   (a - k_i)\\
& = & \frac{a(m-1)}{m(n-1)}.
\end{eqnarray*}
\end{proof}

In fact, we can obtain R-optimal weak AMD codes from a weaker type of
difference family, namely, a PEDF. 

\begin{theorem}
\label{PEDF-construction}
Suppose there is an $(n,m;c_1, \dots , c_{\ell};k_1, \dots , k_{\ell};\lambda_1, \dots , \lambda_{\ell})$-PEDF.   
Then there is an (R-optimal) weak  $(m,n,a(m-1)/(m(n-1)))$-AMD code,
where $a = \sum_{h=1}^{\ell} c_hk_h$.
\end{theorem}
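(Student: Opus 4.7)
The plan is to construct the AMD code directly from the PEDF and then invoke Theorem \ref{weak-probs} to check R-optimality. Let the PEDF consist of disjoint subsets $A_1,\dots,A_m$, where for each $h$ with $1 \leq h \leq \ell$ there are $c_h$ subsets of size $k_h$. Take $\S=\{s_1,\dots,s_m\}$ with equiprobable sources, set $A(s_i)=A_i$, and let $E(s_i)$ be uniform on $A_i$. Then $\prob[g]=1/(m\,k_i)$ whenever $g\in A_i$. By Theorem \ref{weak-probs}, it suffices to show that $\epsilon_\Delta=a(m-1)/(m(n-1))$ for every $\Delta\neq 0$.

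First I would fix $\Delta\neq 0$ and rewrite the success probability by grouping the contributions from the subsets of each common size. Define
\[
f_i(\Delta)=\bigl|\{g\in A_i:g+\Delta\in A_j\text{ for some }j\neq i\}\bigr|,
\]
which is exactly the number of ``external differences'' equal to $\Delta$ emanating from $A_i$. Using equiprobable encoding,
\[
\epsilon_\Delta=\sum_{i=1}^{m}\frac{f_i(\Delta)}{m\,k_{|A_i|}}
=\sum_{h=1}^{\ell}\frac{1}{m\,k_h}\sum_{\{i:|A_i|=k_h\}}f_i(\Delta).
\]
The PEDF condition is precisely the statement that for each $h$ the inner sum equals $\lambda_h$ (the multiset $\bigcup_{|A_i|=k_h}\bigcup_{j\neq i}\D(A_i,A_j)$ equals $\lambda_h(\G\setminus\{0\})$). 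Therefore $\epsilon_\Delta=\sum_{h=1}^{\ell}\lambda_h/(m\,k_h)$, which is independent of $\Delta$.

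Next I would derive the necessary counting identity from the PEDF. Since the multiset on the left has total cardinality $\sum_{|A_i|=k_h}|A_i|(a-|A_i|)=c_h k_h(a-k_h)$ and the right has cardinality $\lambda_h(n-1)$, one obtains the relation
\[
c_h k_h(a-k_h)=\lambda_h(n-1),\qquad 1\leq h\leq\ell.
\]
Substituting $\lambda_h/(m\,k_h)=c_h(a-k_h)/(m(n-1))$ into the expression for $\epsilon_\Delta$ and using $\sum_h c_h=m$ and $\sum_h c_h k_h=a$ yields
\[
\epsilon_\Delta=\frac{1}{m(n-1)}\sum_{h=1}^{\ell}c_h(a-k_h)=\frac{am-a}{m(n-1)}=\frac{a(m-1)}{m(n-1)},
\]
as required. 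By Theorem \ref{weak-probs} the code is R-optimal with the claimed parameters.

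No step presents a serious obstacle: the substantive content is packaged in Theorem \ref{weak-probs} (which reduces R-optimality to the constancy of $\epsilon_\Delta$) and in the PEDF definition (which provides the needed counting identity). The one point that requires a little care is that the individual values $f_i(\Delta)$ may depend on $i$ within a size class, but only the sum over each class enters the calculation, so the PEDF hypothesis is exactly what is needed; no assumption as strong as a GSEDF is required.
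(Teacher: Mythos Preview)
Your proof is correct and follows exactly the approach the paper intends: the paper in fact omits the proof entirely, saying only that it is ``similar to the proof of Theorem~\ref{GSEDF-construction},'' and your argument is precisely that adaptation---construct the code with equiprobable encoding, group the contributions to $\epsilon_\Delta$ by block-size class, use the PEDF identity $c_h k_h(a-k_h)=\lambda_h(n-1)$, and invoke Theorem~\ref{weak-probs}. The one cosmetic point is that your $f_i(\Delta)$ counts occurrences of $\Delta$ in $\bigcup_{j\neq i}\D(A_j,A_i)$ rather than $\bigcup_{j\neq i}\D(A_i,A_j)$, but since the PEDF condition holds for $-\Delta$ as well as $\Delta$, this makes no difference.
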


\begin{proof}
We omit the proof, which is similar to the proof of Theorem  \ref{GSEDF-construction}.
\end{proof}

It is interesting to note that the we do not necessarily obtain an R-optimal AMD code if we start from an
arbitrary generalized external difference family. As an example, suppose we construct
an AMD code with equiprobable encoding for two sources using the GEDF presented in Example \ref{exam13}.
Here it is easy to compute
\[ \epsilon_{1} = \frac{1}{4} >  \frac{a(m-1)}{m(n-1)} = \frac{5\times 1}{2 \times 12} = \frac{5}{24},\] 
so this code is not R-optimal

It is an open problem to characterize R-optimal (weak) AMD codes.
The following example illustrates that the converse of  Theorem \ref{PEDF-construction} is not true in general.
That, is we can construct R-optimal codes that do not come from PEDFs.

\begin{example}
Let $\S = \{1,2,3,4\}$ and let $\G = \zed_{10}$.
The encoding function $E$ is defined by
$E(1) = 0$,  $E(2) = 5$, $E(3) \in_R \{1,9\}$ and $E(4) \in_R \{2,3\}$.

Suppose the adversary chooses $\Delta = 5$; then the adversary wins if
$s \in \{1,2\}$, which occurs with probability $1/2$. Suppose the adversary chooses $\Delta = 1$;
then the adversary succeeds if $s \in \{1,3\}$, which occurs with probability $1/2$.
Suppose the the adversary chooses $\Delta = 2$;
then the adversary succeeds if $s=1$, if $s=3$ and $E(s) = 1$, or
if $s=4$ and $E(s) = 3$. The success probability here is
\[ \frac{1}{4} + \frac{1}{4} \times \frac{1}{2} + \frac{1}{4} \times \frac{1}{2} =  \frac{1}{2}.\]
The remaining choices for $\Delta$ can be checked in a similar way. We obtain a code with
success probability $1/2$. 
Since $m = 4$, $n = 10$ and $a = 6$,
we have $a(m-1)/(m(n-1)) = 18/36 = 1/2$, so the code is R-optimal. 
However, the sets $\{0\},\{5\},\{1,9\},\{2,8\}$
do not form a PEDF. 
\end{example}

We can give a tight characterization of \emph{$k$-regular} R-optimal weak AMD  codes, however, as follows.
\begin{theorem}
\label{weak-equiv}
An (R-optimal) $k$-regular weak $(m,n,k(m-1)/(n-1))$-AMD code is equivalent to an $(n,m,k,\lambda)$-EDF.
\end{theorem}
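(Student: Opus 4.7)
The plan is to decouple the equivalence into two directions and, in both, to interpret the relevant quantities through the multiset of external differences. First, by Theorem \ref{weak-probs}, R-optimality of a weak AMD code is equivalent to $\epsilon_{\Delta} = a(m-1)/(m(n-1))$ for every $\Delta \neq 0$. For a $k$-regular code we have $a = km$ and $\prob[g] = 1/(km)$ for every $g \in \G_0$, so the identity $\epsilon_{\Delta} = \sum_{g \in \good(\Delta)} \prob[g]$ collapses to $\epsilon_{\Delta} = |\good(\Delta)|/(km)$. Thus R-optimality is equivalent to $|\good(\Delta)|$ being the same constant, namely $k^2 m(m-1)/(n-1)$, for every $\Delta \in \G \setminus \{0\}$.

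Next I would identify $|\good(\Delta)|$ with the multiplicity of $\Delta$ in $\bigcup_{i \neq j} \D(A_i, A_j)$. By the definition of $\good(\Delta)$ in equation (\ref{good.eq}), each $g \in \good(\Delta)$ determines uniquely a pair of indices $i \neq j$ with $g \in A_i$ and $g + \Delta \in A_j$ (the disjointness of the sets $A(s)$, built into the AMD code definition, is what gives uniqueness); conversely each occurrence of $\Delta = x - y$ with $x \in A_j$, $y \in A_i$, $i \neq j$ recovers the element $g = y \in \good(\Delta)$. Hence $|\good(\Delta)|$ is exactly the number of times $\Delta$ appears as an external difference among the sets $A_1, \dots, A_m$.

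Finally I would translate this into the EDF property. In the forward direction, a $k$-regular R-optimal code yields disjoint $k$-subsets $A_1, \dots, A_m \subseteq \G$ whose external-difference multiset assigns the common multiplicity $\lambda := k^2 m(m-1)/(n-1)$ to every nonzero group element, which is by definition an $(n,m,k,\lambda)$-EDF. In the converse direction, starting from an $(n,m,k,\lambda)$-EDF I would set $A(s_i) := A_i$ and use equiprobable encoding, obtaining a $k$-regular code with $\epsilon_{\Delta} = \lambda/(km)$ for every $\Delta \neq 0$; using the necessary condition $\lambda(n-1) = k^2 m(m-1)$ from equation (\ref{EDF.eq}), this simplifies to $\epsilon_{\Delta} = k(m-1)/(n-1)$, and Theorem \ref{weak-probs} then delivers R-optimality.

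There is no real obstacle beyond bookkeeping: the only technical point requiring care is the bijective correspondence between $\good(\Delta)$ and the external-difference occurrences, and this rests entirely on the disjointness of the sets $A(s)$. Everything else is a mechanical unwinding of the definitions of ``$k$-regular'', ``R-optimal'', and ``EDF'', combined with the already-proven Theorem \ref{weak-probs}.
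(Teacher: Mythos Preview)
Your proposal is correct and follows essentially the same approach as the paper. The paper's proof likewise computes $\epsilon_{\Delta} = |\good(\Delta)|/(km)$ in the $k$-regular setting and equates constancy of $|\good(\Delta)|$ with the EDF property; the only cosmetic differences are that the paper cites Theorem~\ref{PEDF-construction} for the EDF~$\Rightarrow$~code direction (rather than re-deriving it via Theorem~\ref{weak-probs}) and leaves the bijection between $\good(\Delta)$ and external-difference occurrences implicit.
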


\begin{proof}
Suppose $A_1, \dots , A_m$ is an $(n,m,k,\lambda)$-EDF.
Let $\S = \{s_1, \dots , s_m\}$ be a set of $m$ sources. For $1 \leq i \leq m$, suppose
the encoding function $E(s_i)$ is equiprobable.
The resulting weak AMD code is $k$-regular.
Choose any $\Delta \in \G$, $\Delta \neq 0$. 
The  strategy $g \mapsto g + \Delta$ succeeds with 
probability $\epsilon_{\Delta} = \lambda /(km) = k(m-1)/(n-1)$.
(In fact, this follows from Theorem \ref{PEDF-construction}.)

Conversely, suppose we have an R-optimal $k$-regular weak AMD code. 
%where  $\epsilon = a(m-1)/(m(n-1))$.
Then it must be the case that $\epsilon_{\Delta} = k(m-1)/(n-1)$ for all $\Delta \neq 0$.
Using the fact that the code is a $k$-regular AMD, %with equiprobable encoding,
we have
\[ \frac{k(m-1)}{n-1} = \epsilon_{\Delta} = \prob [ E(s) \in \good(\Delta)] = \frac{|\good(\Delta)|}{km} .\]
Therefore,  
\[ |\good(\Delta)| = \frac{k^2m(m-1)}{n-1}.\]
It then follows that $\{ A(s) : s \in \S\}$ is an 
$(n,m,k,\lambda)$-EDF, where $\lambda = k^2m(m-1)/(n-1)$.
\end{proof}

In Figure \ref{diff-AMD-fig} we indicate the types of difference
families that yield R-optimal weak AMD codes. This summarizes the results proven in this section.

\begin{figure}[tb]
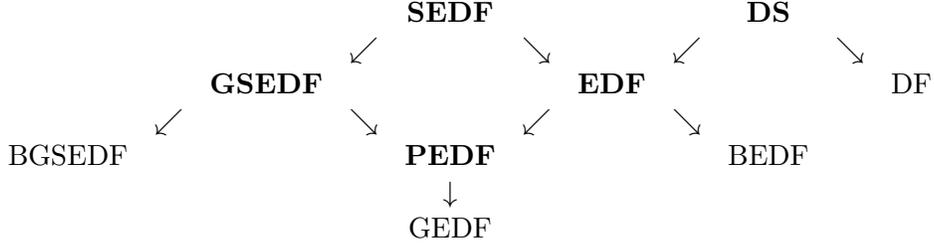

\[
\begin{array}{ccccccccccc}
& &  & & \mathbf{SEDF} & & & &  \mathbf{DS}\\
& &  & \swarrow &  & \searrow & & \swarrow &  & \searrow \\
& & \mathbf{GSEDF} & & & & \mathbf{EDF} & & & & \mathrm{DF}\\ 
& \swarrow &  & \searrow &  & \swarrow & & \searrow &  &  \\
\mathrm{BGSEDF} & &  & & \mathbf{PEDF} & & & &  \mathrm{BEDF}\\
& &  & & \downarrow \\
& &  & & \mathrm{GEDF}
\end{array}
\]
\caption{Difference
families that yield R-optimal weak AMD codes
(indicated in boldface type)}
\label{diff-AMD-fig}
\end{figure}

\subsubsection{G-Optimal Weak AMD Codes}

Now we turn to G-optimality. We have the following characterization of G-optimal
weak AMD codes.

\begin{theorem}
\label{weak-equiv-2}
A (G-optimal) weak $\left(m,n,\frac{1}{a}\right)$-AMD code is equivalent to an $(n,m,k,1)$-BEDF,
where $a = km$.
\end{theorem}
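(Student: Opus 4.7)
The plan is to establish the equivalence in both directions, using the sharp probability characterization that G-optimality forces and then translating the ``no heavy $\Delta$'' condition into the BEDF property.

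For the forward direction, I would first show that G-optimality forces the code to be $k$-regular with $k = a/m$. The key observation is that the sets $A(s)$ are disjoint, so $|\G_0| = a$ and therefore the maximum-probability encoding $\hat{g}$ satisfies $\prob[\hat{g}] \geq 1/a$ by pigeonhole on $\sum_{g \in \G_0} \prob[g] = 1$. Since the strategy $\mathsf{guess}$ succeeds at least whenever $E(s) = \hat{g}$, we have $1/a = \he \geq \prob[\hat{g}] \geq 1/a$, so equality holds throughout. With the maximum of the $a$ probabilities equal to $1/a$ and their sum equal to $1$, every $\prob[g]$ must equal $1/a$. Combined with equiprobable sources, this forces $\prob[E(s)=g] = m/a$ on $A(s)$; summing over $A(s)$ gives $a_s = a/m$ for every $s$, so the code is $k$-regular with $k = a/m$ and hence $a = km$.

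Next I would translate the uniform-probability property into a BEDF condition. With $\prob[g] = 1/a$ for all $g \in \G_0$, equation~(\ref{goodh2.eq}) gives $\epsilon_{\Delta} = |\good(\Delta)|/a$ for every $\Delta \neq 0$. G-optimality, which says $\max_{\Delta} \epsilon_{\Delta} = 1/a$, is therefore equivalent to $|\good(\Delta)| \leq 1$ for all $\Delta \neq 0$. Since $\good(\Delta)$ is exactly the set of pairs giving representations $\Delta = x - y$ with $x \in A_i$, $y \in A_j$, $i \neq j$, this is precisely the defining condition of an $(n,m,k,1)$-BEDF on the sets $A(s_1),\ldots,A(s_m)$.

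For the converse, given an $(n,m,k,1)$-BEDF $A_1,\ldots,A_m$, define $A(s_i) = A_i$ with equiprobable encoding, yielding a $k$-regular code with $a = km$ and $\prob[g] = 1/a$ uniformly on $\G_0$. The BEDF property $|\good(\Delta)| \leq 1$ gives $\epsilon_{\Delta} \leq 1/a$ for every $\Delta \neq 0$, whence $\he \leq 1/a$; combined with $\he \geq 1/a$ from Theorem~\ref{weak-bound-2}, we get $\he = 1/a$, so the code is G-optimal.

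The main obstacle is the opening step of the forward direction: rigorously deducing $k$-regularity (which is \emph{not} assumed in the statement) from the single numerical equality $\he = 1/a$ plus equiprobable sources. The argument must combine the disjointness $|\G_0| = a$ with the pigeonhole equality case in the right order to simultaneously extract both equiprobable encoding and uniform $a_s$; once this is in hand, the remainder is a clean identification of $|\good(\Delta)|$ with the multiplicity of $\Delta$ in $\bigcup_{i\neq j} \D(A_i,A_j)$.
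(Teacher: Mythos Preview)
Your proposal is correct and follows essentially the same route as the paper's proof: first argue that G-optimality forces all encodings to have probability $1/a$ (hence the code is $k$-regular with $k=a/m$), then identify the condition $\epsilon_{\Delta}\le 1/a$ with $|\good(\Delta)|\le 1$, which is exactly the $(n,m,k,1)$-BEDF property. The paper compresses the $k$-regularity step into a reference to the proof of Theorem~\ref{weak-bound-2}, whereas you spell out the pigeonhole equality-case argument explicitly; conversely, for the BEDF $\Rightarrow$ code direction the paper notes directly that some $\Delta$ attains $\epsilon_{\Delta}=1/a$, while you invoke Theorem~\ref{weak-bound-2} for the matching lower bound---either works.
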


\begin{proof}
Suppose $A_1, \dots , A_m$ is an $(n,m,k,1)$-BEDF.
Let $\S = \{s_1, \dots , s_m\}$ be a set of $m$ sources. For $1 \leq i \leq m$, define
an encoding function $E(s_i)$ which chooses an element of $A_i$ uniformly at random.
%The resulting weak AMD code is $k$-regular.
Choose any $\Delta \in \G$, $\Delta \neq 0$. 
The strategy $g \mapsto g + \Delta$ succeeds with 
probability $\epsilon_{\Delta} \leq 1 /(km) = 1/a$, since there is at most 
one occurrence of the difference $\Delta$ in the BEDF. Further, if
$\Delta \in \D(A_j,A_i)$ where $i \neq j$, then the strategy $g \mapsto g + \Delta$ succeeds with 
probability $1/a$.

Conversely, suppose we have a G-optimal weak AMD code. 
%where  $\epsilon = 1/a$. 
From the proof of Theorem \ref{weak-bound-2}, we see that
all encodings must occur with the same probability, $1/a$. Since the sources
are equiprobable, this happens only if the 
code is $k$-regular with $k = a/m$. Now
we claim that $\{ A(s) : s \in \S\}$ is an 
$(n,m,k,1)$-BEDF. This is easy to see, because if some difference occurred more than once,
it would immediately follow that  $\epsilon \geq 2/a$.
\end{proof}

%\noindent{\bf Remark:}  In \cite{CFP} it is shown that a deterministic
%weak AMD code is equivalent to a \emph{bounded difference set}. %This essentially the
%special case of Theorem \ref{weak-equiv} applied to deterministic AMD codes.

Now we characterize $k$-regular weak AMD codes that are simultaneously R-optimal and G-optimal.
%optimal with respect to Corollary \ref{weak-bound-new}.

\begin{theorem}
\label{weak-equiv-new}
A $k$-regular weak %$\left(m,n,\sqrt{\frac{m-1}{m(n-1)}}\right)$-AMD code 
AMD code that is simultaneously R-optimal and G-optimal 
is equivalent to an $(n,m,k,1)$-EDF.
\end{theorem}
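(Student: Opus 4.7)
The plan is to deduce this theorem directly by combining the two previous equivalences, namely Theorem \ref{weak-equiv} (R-optimality for $k$-regular codes corresponds to an $(n,m,k,\lambda)$-EDF) and Theorem \ref{weak-equiv-2} (G-optimality corresponds to an $(n,m,k,1)$-BEDF). The key observation is that an EDF that is simultaneously a $1$-bounded EDF can only have $\lambda = 1$, and conversely an $(n,m,k,1)$-EDF is automatically an $(n,m,k,1)$-BEDF since ``each nonzero difference occurs exactly once'' implies ``each nonzero difference occurs at most once.''

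For the forward direction, I would start from a $k$-regular weak AMD code that is both R-optimal and G-optimal. Applying Theorem \ref{weak-equiv}, the blocks $\{A(s) : s \in \S\}$ form an $(n,m,k,\lambda)$-EDF for $\lambda = k^2 m(m-1)/(n-1)$, so every nonzero element of $\G$ appears \emph{exactly} $\lambda$ times as a difference $x-y$ with $x \in A_i$, $y \in A_j$, $i \neq j$. Applying Theorem \ref{weak-equiv-2}, the same blocks form an $(n,m,k,1)$-BEDF, so every nonzero difference occurs \emph{at most} once. Combining these two conditions forces $\lambda = 1$, which gives an $(n,m,k,1)$-EDF. (Equivalently, one can verify numerically that R-optimal and G-optimal together require $k(m-1)/(n-1) = 1/(km)$, i.e., $n-1 = k^2 m(m-1)$, which matches the counting identity $\lambda (n-1) = k^2 m(m-1)$ for an EDF precisely when $\lambda = 1$.)

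For the converse, given an $(n,m,k,1)$-EDF, Theorem \ref{weak-equiv} (with $\lambda = 1$) produces an R-optimal $k$-regular weak AMD code with $\he = k(m-1)/(n-1)$. Since every $(n,m,k,\lambda)$-EDF is trivially an $(n,m,k,\lambda)$-BEDF, our EDF is in particular an $(n,m,k,1)$-BEDF, and Theorem \ref{weak-equiv-2} then guarantees that the same code is G-optimal.

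I do not anticipate any real obstacle here: the theorem is essentially a corollary of the two characterizations just proven, and the only thing to check carefully is the forcing of $\lambda = 1$ in the forward direction. No new constructions or counting arguments are needed.
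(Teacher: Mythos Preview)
Your proposal is correct and follows essentially the same approach as the paper: combine Theorem~\ref{weak-equiv} and Theorem~\ref{weak-equiv-2} to conclude that the blocks form both an $(n,m,k,\lambda)$-EDF and an $(n,m,k,1)$-BEDF, forcing $\lambda=1$. Your write-up is in fact slightly more complete, since the paper's proof treats only the forward direction explicitly and leaves the converse implicit.
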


\begin{proof}
From Theorem \ref{weak-bound-new}, the code has success probability $\sqrt{\frac{m-1}{m(n-1)}}$.
In order for this to occur, the bounds of Corollary \ref{weak-bound-cor-uniform} and \ref{weak-bound-2} both must hold with equality.
Therefore the AMD code is simultaneously an $(n,m,k,\lambda)$-EDF (from Theorem \ref{weak-equiv}) and an $(n,m,k,1)$-BEDF
(from Theorem \ref{weak-equiv-2}).
Hence, it is an $(n,m,k,1)$-EDF.
\end{proof}

In Figure \ref{diff-AMD-fig-2} we indicate the types of difference
families that yield G-optimal weak AMD codes. Note that the relevant difference
families are assumed to have $\lambda=1$ in this figure.
%This summarizes the results proven in this section.

\begin{figure}[tb]
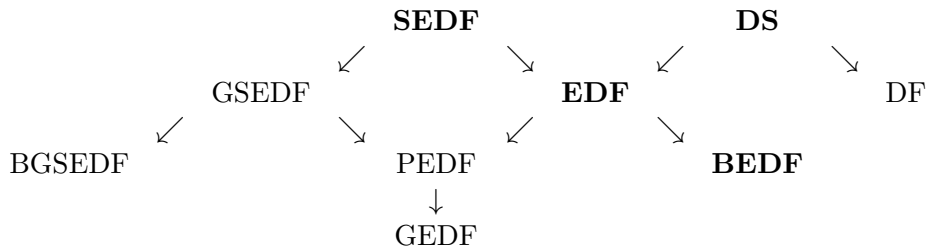

\[
\begin{array}{ccccccccccc}
& &  & & \mathbf{SEDF} & & & &  \mathbf{DS}\\
& &  & \swarrow &  & \searrow & & \swarrow &  & \searrow \\
& & \mathrm{GSEDF} & & & & \mathbf{EDF} & & & & \mathrm{DF}\\ 
& \swarrow &  & \searrow &  & \swarrow & & \searrow &  &  \\
\mathrm{BGSEDF} & &  & & \mathrm{PEDF} & & & &  \mathbf{BEDF}\\
& &  & & \downarrow \\
& &  & & \mathrm{GEDF}
\end{array}
\]
\caption{Difference
families with $\lambda = 1$ that yield G-optimal weak AMD codes
(indicated in boldface type)}
\label{diff-AMD-fig-2}
\end{figure}

\section{Strong AMD Codes}
\label{strong.sec}

We begin by focussing on the success probability of the adversary when the source 
is fixed to be $s$. 
%
%It is easy to see that there must exist a pure strategy $s \mapsto s + \Delta_s$
%whose success probability is as least as good as the random strategy.
%The adversary's overall strategy will be to choose the optimal $\Delta_s$ for every possible source $s$.
%
Let $\he_s$ be the success probability of the optimal strategy for the given source $s$.

%A bound similar to that of Theorem \ref{weak-bound}  holds for strong
%AMD codes.  

%The resulting bound follows immediately.

\begin{theorem}
\label{strong-bound-R}
In any strong AMD code, it holds that \[ \he_s \geq \frac{a-a_{s}}{n-1}\]
for any source $s \in \S$. 
\end{theorem}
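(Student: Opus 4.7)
The plan is to mimic the proof of Theorem \ref{weak-bound}, but applied per-source rather than averaged over sources. Since the strong setting fixes $s$ before $\Delta$ is chosen, I can consider the adversarial strategy $\mathsf{rand}_s$ that, for the given source $s$, selects $\Delta \in \G \setminus \{0\}$ uniformly at random. The claimed lower bound will then just be the success probability of this explicit strategy.

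First I would fix any $g \in A(s)$ and compute the probability (over random $\Delta$) that $g + \Delta \in A(s')$ for some $s' \neq s$. The set $B = \bigcup_{s' \neq s} A(s')$ has size exactly $a - a_s$ by the disjointness assumption on the $A(s')$, and since $A(s) \cap A(s') = \emptyset$ for $s' \neq s$, we have $g \notin B$. As $\Delta$ ranges over $\G \setminus \{0\}$, the element $g + \Delta$ ranges over $\G \setminus \{g\}$, which contains all of $B$. Hence exactly $a - a_s$ of the $n - 1$ possible values of $\Delta$ cause the adversary to win, giving conditional success probability $(a - a_s)/(n-1)$.

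Next I would average this conditional probability over the encoding distribution of $E(s)$:
\[
\epsilon_{\mathsf{rand}_s, s} = \sum_{g \in A(s)} \prob[E(s) = g] \cdot \frac{a - a_s}{n-1} = \frac{a - a_s}{n-1},
\]
since $\sum_{g \in A(s)} \prob[E(s) = g] = 1$. Because $\he_s$ is the supremum of $\epsilon_{\sigma,s}$ over all strategies $\sigma$, and $\mathsf{rand}_s$ is one specific strategy, we immediately conclude $\he_s \geq (a - a_s)/(n-1)$.

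There is essentially no obstacle here: the argument is a direct per-source analog of the weak bound, and is in fact simpler because no averaging over sources (and hence no appeal to equiprobable sources) is required. The only subtle point worth making explicit in the write-up is that $g \notin B$ for $g \in A(s)$, which ensures the bijection $\Delta \mapsto g + \Delta$ from $\G \setminus \{0\}$ onto $\G \setminus \{g\}$ hits every element of $B$ exactly once, so the count $(a - a_s)$ is exact and not merely an upper bound.
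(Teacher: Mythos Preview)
Your proposal is correct and follows exactly the same approach as the paper: compute the success probability of the uniformly random $\Delta$ strategy for the fixed source $s$, obtaining $(a-a_s)/(n-1)$ and hence the lower bound on $\he_s$. Your write-up is in fact more detailed than the paper's (which simply asserts the value), and your observation that $g \notin B$ makes the count exact is a nice touch that the paper leaves implicit.
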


\begin{proof}
As in the proof of Theorem \ref{weak-bound}, we consider a random strategy, 
i.e., $\Delta \neq 0$ is chosen uniformly at random. 
Given that the source is $s$,  
it is easy to see that 
the success probability of this strategy will be 
\[ \frac{a-a_s}{n-1} .\]
\end{proof}

\begin{definition}
\label{strong-R-optimal}
We will define a strong AMD code that meets the bound of Theorem \ref{strong-bound-R} 
with equality for every possible source $s$
to be \emph{R-optimal}. Again, ``R'' is used to indicate that choosing $\Delta \neq 0$ uniformly
at random is an optimal strategy. 
\end{definition}

\begin{corollary}
\label{strong-bound-1}
In any strong $(m,n,\he)$-AMD code, it holds that \[\he \geq \frac{a-a_{s'}}{n-1},\]
where $a_{s'} = \min \{ a_s :s \in \S\}$. 
\end{corollary}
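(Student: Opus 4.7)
The plan is to derive this as an immediate consequence of Theorem \ref{strong-bound-R} by choosing the source that maximizes the lower bound. Recall that $\he$ is defined to be the maximum success probability taken over all strategies \emph{and} all sources $s$, so in particular $\he \geq \he_s$ for every $s \in \S$.

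First, I would invoke Theorem \ref{strong-bound-R} to assert that for every source $s$, the inequality $\he_s \geq (a - a_s)/(n-1)$ holds. Hence
\[
\he \;\geq\; \max_{s \in \S} \he_s \;\geq\; \max_{s \in \S} \frac{a - a_s}{n-1}.
\]
Since $a$ and $n-1$ are fixed, the right-hand side is maximized when $a_s$ is minimized. Choosing $s'$ so that $a_{s'} = \min\{ a_s : s \in \S\}$ then yields $\he \geq (a - a_{s'})/(n-1)$, as required.

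There is no real obstacle here: the corollary simply specializes Theorem \ref{strong-bound-R} to a particular ``worst-case'' source and then uses the definition of $\he$ as an outer maximum over sources. The only thing worth remarking on, which I would include for clarity, is that the random strategy attaining the bound in Theorem \ref{strong-bound-R} does not depend on $s$ (only its success rate does), so the adversary in the strong model can realize this probability simply by being told that the source is $s'$ and then sampling $\Delta$ uniformly from $\G \setminus \{0\}$.
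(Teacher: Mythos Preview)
Your proof is correct and follows essentially the same approach as the paper's: apply Theorem~\ref{strong-bound-R} and observe that the bound $(a-a_s)/(n-1)$ is largest when $a_s$ is smallest. One minor remark: since $\he = \max_{\sigma,s} \epsilon_{\sigma,s}$ and $\he_s = \max_{\sigma} \epsilon_{\sigma,s}$, you actually have $\he = \max_{s \in \S} \he_s$ rather than merely $\he \geq \max_{s \in \S} \he_s$, but this does not affect the argument.
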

\begin{proof} 
The quantity $(a-a_{s})/(n-1)$ is maximized when $a_{s}$ is minimized.
\end{proof}

If the code is $k$-uniform, then the previous bound takes a simpler form.

\begin{corollary}
\label{strong-bound-2}
In any $k$-uniform strong $(m,n,\he)$-AMD code, it holds that \[\he \geq \frac{k(m-1)}{n-1}.\]
\end{corollary}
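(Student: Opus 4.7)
The plan is to derive this directly from Corollary \ref{strong-bound-1}, which is already stated. Corollary \ref{strong-bound-1} tells us that $\he \geq (a - a_{s'})/(n-1)$, where $a_{s'} = \min\{a_s : s \in \S\}$, so all that remains is to evaluate $a$ and $a_{s'}$ under the $k$-uniform assumption.

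First I would invoke the definition of $k$-uniformity: $a_s = k$ for every source $s \in \S$. This immediately gives $a_{s'} = k$ (since the minimum of a constant sequence is that constant) and, from the relation $a = \sum_{s \in \S} a_s$ noted in the Notation subsection, $a = km$.

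Substituting these two values into the bound of Corollary \ref{strong-bound-1} yields
\[
\he \;\geq\; \frac{a - a_{s'}}{n-1} \;=\; \frac{km - k}{n-1} \;=\; \frac{k(m-1)}{n-1},
\]
which is the desired inequality.

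There is essentially no obstacle here — the content is entirely in Theorem \ref{strong-bound-R} and Corollary \ref{strong-bound-1}; Corollary \ref{strong-bound-2} is a one-line specialization whose only ``step'' is observing that in a $k$-uniform code the minimum encoding-set size equals the common size $k$ and the total $a$ equals $km$.
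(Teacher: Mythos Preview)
Your proposal is correct and follows exactly the same approach as the paper: observe that $a_s = k$ for all $s$ (so $a_{s'} = k$) and $a = km$, then apply Corollary~\ref{strong-bound-1}.
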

\begin{proof} 
Here $a_s = k$ for all $s$ and $a = km$. Apply Corollary \ref{strong-bound-1}.
\end{proof}

\begin{theorem}
\label{strong-bound-G}
In any strong AMD code, 
it holds that $\he_s  \geq  1/a_s,$
for any source $s \in \S$. 
\end{theorem}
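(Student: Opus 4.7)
The plan is to mimic the proof of Theorem \ref{weak-bound-2} in the strong-adversary setting, by exhibiting a ``guess'' strategy whose success probability, for the given source $s$, is at least $1/a_s$.

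First I would fix the source $s \in \S$. Unlike in the weak case, the adversary in the strong model sees $s$ before committing to $\Delta$, so the analysis is conditional on $s$. The relevant probability distribution is the conditional distribution of $E(s)$ on $A(s)$, namely $\prob[E(s) = g]$ for $g \in A(s)$. Since these $a_s$ probabilities sum to $1$, by pigeonhole there exists some $\hat{g} \in A(s)$ with $\prob[E(s) = \hat{g}] \geq 1/a_s$.

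Next I would construct the adversary's strategy. Pick any source $s' \neq s$ (which exists whenever $m \geq 2$; the case $m = 1$ is degenerate and the bound is vacuous) and any $g' \in A(s')$, and set $\Delta = g' - \hat{g}$. Since $A(s) \cap A(s') = \emptyset$ we have $\hat{g} \neq g'$, so $\Delta \neq 0$ as required. Whenever the encoder produces $E(s) = \hat{g}$, we get $E(s) + \Delta = g' \in A(s')$, so the adversary wins. Hence the success probability of this strategy, conditioned on source $s$, is at least $\prob[E(s) = \hat{g}] \geq 1/a_s$, giving $\he_s \geq 1/a_s$.

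There is essentially no obstacle here: the argument is a direct adaptation of the weak-code ``guess'' strategy, with the simplification that we no longer average over $s$ because the strong adversary is given $s$ up front. The only minor care needed is verifying that a legal, nonzero $\Delta$ always exists, which follows immediately from the disjointness of the sets $A(s)$ and $A(s')$.
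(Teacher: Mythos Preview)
Your proposal is correct and follows essentially the same ``guess'' strategy as the paper's own proof: pick the most likely encoding $\hat{g}$ of the known source $s$ (which has probability at least $1/a_s$) and choose $\Delta$ so that $\hat{g}+\Delta$ lands in some other $A(s')$. Your treatment is in fact slightly more careful than the paper's, since you explicitly verify $\Delta \neq 0$ via the disjointness of $A(s)$ and $A(s')$ and note the degenerate case $m=1$.
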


\begin{proof}
Given any source $s$, the 
adversary   can try to guess the encoded message $E(s)$ that is output. The adversary  will maximize 
his probability of success by choosing $g$ such that $\prob[g \mid s]$ is maximized.
Note that there exists a $g$ such that $\prob[g \mid s] \geq 1/a_s$.
Then the adversary  can choose $\Delta$ such that $g + \Delta \in  \G_0 \setminus A(s)$.
The success probability of this strategy is clearly at least $1/a_s$.
\end{proof}

\begin{definition}
\label{strong-G-optimal}
We will define a strong AMD code that meets the bound of Theorem \ref{strong-bound-G} 
with equality for every possible source $s$
to be \emph{G-optimal}. Again, ``G'' is used to indicate that guessing the most likely 
encoding is an optimal strategy. 
\end{definition}

\begin{corollary}
\label{strong-cor-G}
In any strong $(m,n,\he)$-AMD code, 
it holds that $\he \geq 1/a_{s'},$
where $a_{s'} = \min \{ a_s :s \in \S\}$. 

\end{corollary}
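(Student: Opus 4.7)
The plan is to derive this corollary as a direct consequence of Theorem \ref{strong-bound-G}, mirroring how Corollary \ref{strong-bound-1} was deduced from Theorem \ref{strong-bound-R}. The key observation is that $\he$ is defined as the maximum of $\he_s$ over all sources $s \in \S$ (this follows from the definition of strong AMD code, where $\he = \max_{\sigma,s}\{\epsilon_{\sigma,s}\}$), so any lower bound on some individual $\he_s$ automatically becomes a lower bound on $\he$.

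First I would recall that by Theorem \ref{strong-bound-G}, for every source $s \in \S$ we have $\he_s \geq 1/a_s$. Next, I would choose $s = s'$ where $s'$ is the source minimizing $a_s$, i.e., $a_{s'} = \min\{a_s : s \in \S\}$. Applying the theorem at this particular source gives $\he_{s'} \geq 1/a_{s'}$. Finally, since $\he \geq \he_{s'}$ by definition of $\he$ as a maximum over sources, the chain of inequalities $\he \geq \he_{s'} \geq 1/a_{s'}$ yields the desired bound.

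There is no real obstacle here; the proof is a one-line invocation of the theorem together with the elementary fact that $1/a_s$ is maximized by taking $a_s$ to be minimized. The only subtlety worth noting in the write-up is that we need the definition of $\he$ as the maximum of $\he_s$ over all $s$, which is precisely what was established in the formal definition of a strong AMD code. Thus the proof can be written very concisely, essentially as ``apply Theorem \ref{strong-bound-G} with $s = s'$.''
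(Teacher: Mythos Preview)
Your proposal is correct and follows essentially the same approach as the paper: both invoke Theorem~\ref{strong-bound-G} and observe that the lower bound $1/a_s$ is maximized when $a_s$ is minimized. The paper's proof is the one-liner ``the quantity $1/a_s$ is maximized when $a_{s}$ is minimized,'' which is exactly what you wrote.
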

\begin{proof} 
The quantity $1/a_s$ is maximized when $a_{s}$ is minimized.
\end{proof}

In the case of a $k$-regular code, we have the following corollary.

\begin{corollary}
\label{strong-uniform-G}
In any $k$-regular strong $(m,n,\he)$-AMD code, 
it holds that $\he \geq 1/k$. 
\end{corollary}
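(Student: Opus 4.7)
The plan is to derive this corollary directly from Corollary \ref{strong-cor-G}, which states that $\he \geq 1/a_{s'}$ where $a_{s'} = \min\{a_s : s \in \S\}$. The key observation is that $k$-regularity imposes $a_s = k$ for every source $s \in \S$, which immediately fixes the value of the minimum.

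First I would recall the definition of a $k$-regular code: it is $k$-uniform (so $a_s = k$ for all $s$) and additionally has equiprobable sources and equiprobable encoding. For the present bound only the $k$-uniformity is needed. Next I would observe that, under this condition, $a_{s'} = \min\{a_s : s \in \S\} = k$. Substituting into the inequality $\he \geq 1/a_{s'}$ from Corollary \ref{strong-cor-G} then yields $\he \geq 1/k$.

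There is no real obstacle here — the statement is a one-line specialization of the previous corollary to the $k$-regular setting, and the only ``work'' is noting that in a $k$-regular code the minimum valid-encoding count coincides with the common value $k$. For completeness one could alternatively derive it straight from Theorem \ref{strong-bound-G} by picking any source $s$ and using $a_s = k$, which gives $\he \geq \he_s \geq 1/k$.
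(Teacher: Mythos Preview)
Your proposal is correct and matches the paper's approach exactly: the paper states this corollary immediately after Corollary~\ref{strong-cor-G} without even writing out a proof, since it is precisely the specialization you describe (in a $k$-regular code $a_s=k$ for all $s$, so $a_{s'}=k$).
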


We now have an easy proof of the following previously known bound.

\begin{theorem}\cite[Theorem 2.2]{CFP}
\label{strong-bound}
In any $k$-uniform, strong $(m,n,\he)$-AMD code, 
it holds that \[\he^2 \geq \frac{m-1}{n-1}.\]
\end{theorem}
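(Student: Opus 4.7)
The plan is to derive this as an immediate consequence of two bounds we have already established, mirroring the structure used for weak codes in Theorem \ref{weak-bound-new}. Specifically, for a $k$-uniform strong AMD code, Corollary \ref{strong-bound-2} gives the ``random $\Delta$'' bound
\[ \he \geq \frac{k(m-1)}{n-1}, \]
while the ``guess the encoding'' bound of Theorem \ref{strong-bound-G}, applied to any source $s$ (so that $a_s = k$), gives $\he_s \geq 1/k$, and hence
\[ \he \geq \frac{1}{k}. \]

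First I would quote these two inequalities and note that both right-hand sides are positive. Then, since $\he \geq A > 0$ and $\he \geq B > 0$ together imply $\he^2 = \he \cdot \he \geq A \cdot B$, multiplying gives
\[ \he^2 \geq \frac{k(m-1)}{n-1} \cdot \frac{1}{k} = \frac{m-1}{n-1}, \]
as required.

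There is essentially no obstacle here: the two component bounds have already been proved, and the only thing to observe is that multiplying two lower bounds for a positive quantity is legitimate (a one-line check using $\he \cdot \he \geq A \cdot \he \geq A \cdot B$ when $A,B \geq 0$). The only subtlety worth flagging in the proof is that the two adversarial strategies being combined—choosing $\Delta$ uniformly at random, and guessing the most likely encoding of the given source—are distinct, so the product bound expresses a genuine tension between them rather than following from a single strategy.
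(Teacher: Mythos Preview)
Your proposal is correct and follows essentially the same approach as the paper: cite the two bounds $\he \geq k(m-1)/(n-1)$ and $\he \geq 1/k$, then multiply. The only cosmetic difference is that the paper cites Corollary~\ref{strong-uniform-G} for the second bound, whereas you invoke Theorem~\ref{strong-bound-G} directly; your route is arguably cleaner since Corollary~\ref{strong-uniform-G} is stated for $k$-regular codes while the theorem is for $k$-uniform codes, and Theorem~\ref{strong-bound-G} already gives $\he_s \geq 1/a_s = 1/k$ without any regularity assumption.
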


\begin{proof}
From Corollary \ref{strong-bound-2}, we have
 \[\he \geq \frac{k(m-1)}{n-1}.\]
Furthermore, from Corollary \ref{strong-uniform-G}, we have 
$\he \geq {1}/{k}.$
Multiplying these two inequalities, we get
\[\he^2 \geq \frac{m-1}{n-1}.\]
\end{proof}

\noindent{\bf Remark:} We will determine in Theorem \ref{strong-bound-equiv} 
necessary and sufficient conditions for the bound of Theorem \ref{strong-bound} to be met with 
equality in all nontrivial cases, i.e., when $\he < 1$.

\subsection{Optimal Strong AMD Codes}
\label{strongoptimal.sec}

\subsubsection{R-Optimal Strong AMD Codes}

Suppose the source $s$ is fixed. Consider the strategy $g \mapsto g + \Delta$, where $\Delta \neq 0$.
Let $\epsilon_{\Delta,s}$ denote the success probability of this strategy.
%Let $\he_s$ be the success probability of the optimal strategy 
%(for the given source $s$); 
Then it is clear that 
\begin{equation}
\label{goodhats.eq} 
\he_s = 
\max \{ \epsilon_{\Delta,s} : \Delta \neq 0\}.
\end{equation}

For any $\Delta \neq 0$, define 
\begin{equation}
\label{goods.eq} 
\good(\Delta,s) = \{ g : g \in A(s) \text{ and } g + \Delta \in A(s'), \text{where } s' \neq s\} .
\end{equation}
This is the same definition as (\ref{good.eq}), except that $s$ is now fixed.

\begin{lemma}
For any $\Delta \neq 0$,  it holds
that
\begin{equation}
\label{goodh2s.eq} 
\epsilon_{\Delta,s} = \sum_{g \in \good(\Delta,s)} \prob[E(s) = g].
\end{equation}
\end{lemma}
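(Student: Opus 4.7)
The plan is to mirror exactly the argument used for the analogous weak-code lemma (the one giving $\epsilon_{\Delta} = \sum_{g \in \good(\Delta)}\prob[g]$), but taking advantage of the fact that in the strong model the source $s$ is fixed and handed to the adversary before any randomness is sampled. So the outer expectation over $\S$ that appeared in the weak case disappears, and the only remaining randomness is that of the (possibly randomized) encoder $E(s)$.

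First I would unpack the definition of $\epsilon_{\Delta,s}$: conditioned on the source being $s$, the adversary's fixed choice $\Delta \neq 0$ commits to the substitution $g \mapsto g+\Delta$, so the only random variable left is $g = E(s)$. The adversary wins precisely when $g \in A(s)$ and $g+\Delta \in A(s')$ for some $s' \neq s$, which is exactly the event $g \in \good(\Delta,s)$ by the definition in equation (\ref{goods.eq}). (Note that with probability $1$ we have $g \in A(s)$, since $E(s)$ takes values in $A(s)$, so this condition is automatic and need not be imposed separately.)

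Next I would write
\[
\epsilon_{\Delta,s} \;=\; \prob[\,E(s) \in \good(\Delta,s)\,] \;=\; \sum_{g \in \good(\Delta,s)} \prob[E(s) = g],
\]
the second equality being just countable additivity of probability over the disjoint singleton events $\{E(s)=g\}$ for $g \in \good(\Delta,s) \subseteq \G_0$. This is formally the same manipulation as in the weak-code lemma, minus the factor $\prob[s]$ and minus the sum over $s \in \S$ (both absent because $s$ is conditioned on and not random).

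There is essentially no obstacle here; this is a bookkeeping step whose sole purpose is to express $\epsilon_{\Delta,s}$ in a form that is convenient for later calculations (paralleling what was done in the weak case before the characterization theorems). The only thing to be careful about is making clear that the probability on the right-hand side is taken solely over the encoder's randomness for the given source $s$, not over any distribution on $\S$ (there is none in the strong model, per the definition of a strong AMD code).
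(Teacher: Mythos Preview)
Your proposal is correct and follows essentially the same approach as the paper's own proof: both identify that the adversary wins exactly when $E(s)\in\good(\Delta,s)$ and then expand $\prob[E(s)\in\good(\Delta,s)]$ as the sum of the encoding probabilities over that set. Your additional remarks about why the sum over $\S$ and the factor $\prob[s]$ disappear are accurate context but not needed for the argument itself.
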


\begin{proof}
It is clear that 
\begin{eqnarray*} \epsilon_{\Delta,s} &=& \prob [ E(s)  \in \good(\Delta,s)]\\
& = & \sum_{g \in \good(\Delta,s)} \prob[E(s) = g].
\end{eqnarray*}
\end{proof}

\begin{theorem}
\label{strong-probs}
In any strong AMD code,
$\he_s = (a-a_s)/(n-1)$
if and only if $\epsilon_{\Delta,s} = (a-a_s)/(n-1)$ 
for all $\Delta \neq 0$.
\end{theorem}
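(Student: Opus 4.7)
The plan is to mimic the proof of Theorem \ref{weak-probs} (the analogous characterization in the weak case), with the simplification that here the source $s$ is held fixed throughout. One direction is immediate: if $\epsilon_{\Delta,s} = (a - a_s)/(n-1)$ for every $\Delta \neq 0$, then by (\ref{goodhats.eq}) the maximum $\he_s$ equals $(a - a_s)/(n-1)$. The substantive direction is the converse, and I would handle it by an averaging argument.

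Concretely, I would compute $\sum_{\Delta \neq 0} \epsilon_{\Delta,s}$ by swapping the order of summation using (\ref{goodh2s.eq}):
\begin{align*}
\sum_{\Delta \neq 0} \epsilon_{\Delta,s}
&= \sum_{\Delta \neq 0} \sum_{g \in \good(\Delta,s)} \prob[E(s) = g] \\
&= \sum_{g \in A(s)} \prob[E(s) = g] \cdot \bigl|\{\Delta \neq 0 : g \in \good(\Delta,s)\}\bigr|.
\end{align*}
The inner cardinality counts the number of nonzero $\Delta$ such that $g + \Delta$ lies in some $A(s')$ with $s' \neq s$; since the sets $A(s')$ ($s' \neq s$) are disjoint from $A(s)$ and have total size $a - a_s$, each $g \in A(s)$ contributes exactly $a - a_s$ valid shifts. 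Thus
\[
\sum_{\Delta \neq 0} \epsilon_{\Delta,s} = (a - a_s) \sum_{g \in A(s)} \prob[E(s) = g] = a - a_s,
\]
so the average value of $\epsilon_{\Delta,s}$ over the $n-1$ nonzero choices of $\Delta$ is exactly $(a - a_s)/(n-1)$.

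Now if $\he_s = (a - a_s)/(n-1)$, then every $\epsilon_{\Delta,s}$ is bounded above by this average; combined with the fact that their mean equals the bound, each $\epsilon_{\Delta,s}$ must achieve equality. This completes the converse direction. The only minor subtlety is verifying the counting step, namely that $|\{\Delta \neq 0 : g \in \good(\Delta, s)\}| = a - a_s$ for every $g \in A(s)$; this follows because $\Delta \mapsto g + \Delta$ is a bijection on $\G$, and the condition $g + \Delta \in A(s')$ for some $s' \neq s$ simply asks $g + \Delta$ to land in the union of the $a - a_s$ elements of $\bigcup_{s' \neq s} A(s')$, with the case $\Delta = 0$ automatically excluded since $g \in A(s)$ is disjoint from these sets. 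I do not anticipate any real obstacle — this is a direct adaptation of the weak-code argument.
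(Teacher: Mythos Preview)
Your proposal is correct and follows essentially the same approach as the paper's own proof: both compute $\sum_{\Delta \neq 0} \epsilon_{\Delta,s}$ by interchanging the order of summation, observe that each $g \in A(s)$ contributes exactly $a - a_s$ valid shifts, conclude that the average of the $\epsilon_{\Delta,s}$ is $(a-a_s)/(n-1)$, and then use the fact that the maximum equals the average to force all terms equal. Your write-up is slightly more explicit about the easy direction and the counting step, but the argument is the same.
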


\begin{proof}
Suppose we have an AMD code 
where  $\he_s = (a-a_s)/(n-1)$.
It is not hard to compute
\begin{eqnarray*}
\sum _{\Delta \neq 0} \epsilon_{\Delta,s} &=& \sum _{\Delta \neq 0} \sum_{g \in \good(\Delta,s)} \prob[E(s) = g]\\
&=&  \sum_{g \in A(s)}  \prob[E(s) = g] \times | \{\Delta :g \in \good(\Delta,s)  \}|  \\
&=& \sum_{g \in A(s)}  \prob[E(s) = g] \times  (a-a_s)    \\
&=& a-a_s.
\end{eqnarray*}
Therefore the average of the quantities $\epsilon_{\Delta,s}$ ($\Delta \neq 0$) is equal to 
$(a-a_s)/(n-1)$.
In order to have $\he_s = (a-a_s)/(n-1)$, it must be the case that
$\epsilon_{\Delta,s} = (a-a_s)/(n-1)$ for all $\Delta \neq 0$.
\end{proof}

\begin{theorem}
\label{GSEDF-construction-2}
Suppose there is an $(n,m;k_1, \dots , k_m;\lambda_1, \dots , \lambda_m)$%
-GSEDF.   
Then there is an R-optimal strong AMD code where $a = \sum_{i=1}^{m} k_i$.  
%and $\he_s = (a-a_s)/(n-1)$ for all $s \in \S$.
\end{theorem}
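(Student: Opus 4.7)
The plan is to mirror the construction and analysis of Theorem \ref{GSEDF-construction}, but now fixing each source $s$ in turn rather than averaging over a source distribution. Given the GSEDF $A_1,\dots,A_m$, I would build the strong AMD code in the obvious way: let $\S=\{s_1,\dots,s_m\}$, set $A(s_i)=A_i$, and let the encoding function $E(s_i)$ be uniform on $A_i$. Then $a_{s_i}=k_i$ and $a=\sum_i k_i$, so the target R-optimality condition becomes $\he_{s_i}=(a-k_i)/(n-1)$ for each $i$.

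Next I would fix an index $i$ and an arbitrary nonzero $\Delta\in\G$, and compute $|\good(\Delta,s_i)|$ using the defining multiset equation of a GSEDF. A value $g\in A_i$ lies in $\good(\Delta,s_i)$ exactly when $g+\Delta\in A_j$ for some $j\neq i$; equivalently, the pair $(g,g+\Delta)$ contributes the difference $g-(g+\Delta)=-\Delta$ to $\bigcup_{j\neq i}\D(A_i,A_j)$. Since this multiset equals $\lambda_i(\G\setminus\{0\})$, the element $-\Delta$ appears exactly $\lambda_i$ times, and because the $A_j$ are pairwise disjoint each such occurrence corresponds to a distinct $g\in A_i$. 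Hence $|\good(\Delta,s_i)|=\lambda_i$, and by the equiprobable encoding,
\[
\epsilon_{\Delta,s_i}\;=\;\sum_{g\in\good(\Delta,s_i)}\prob[E(s_i)=g]\;=\;\frac{\lambda_i}{k_i}.
\]

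To identify $\lambda_i/k_i$ with $(a-k_i)/(n-1)$, I would use the counting identity that falls out of the GSEDF definition: the multiset $\bigcup_{j\neq i}\D(A_i,A_j)$ has cardinality $k_i(a-k_i)$ on one hand and $\lambda_i(n-1)$ on the other, so $\lambda_i(n-1)=k_i(a-k_i)$. (This is the natural per-index analogue of the necessary condition (\ref{SEDF-nc.eq}).) Plugging in gives $\epsilon_{\Delta,s_i}=(a-k_i)/(n-1)$ for every $\Delta\neq 0$, so by (\ref{goodhats.eq}) we get $\he_{s_i}=(a-k_i)/(n-1)=(a-a_{s_i})/(n-1)$, exactly matching the bound of Theorem \ref{strong-bound-R}.

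I do not expect any genuine obstacle: the whole argument is a source-by-source rerun of Theorem \ref{GSEDF-construction}, and the only point needing care is the bookkeeping of which direction of difference ($\D(A_i,A_j)$ versus $\D(A_j,A_i)$) corresponds to the adversary's shift $g\mapsto g+\Delta$; once one observes that fixing $\Delta$ and ranging over $g\in A_i$ is the same as fixing the difference $-\Delta$ in $\bigcup_{j\neq i}\D(A_i,A_j)$, the GSEDF property delivers the count immediately. One could alternatively cite the proof of Theorem \ref{GSEDF-construction} almost verbatim and simply drop the outer averaging over $s$, which is probably how I would present the final writeup.
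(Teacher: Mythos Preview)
Your proposal is correct and follows essentially the same route as the paper: build the code from the GSEDF with equiprobable encoding, observe that $|\good(\Delta,s_i)|=\lambda_i$ for every nonzero $\Delta$, and then invoke the identity $k_i(a-k_i)=\lambda_i(n-1)$ to conclude $\epsilon_{\Delta,s_i}=(a-k_i)/(n-1)$. The paper's proof is just the terse version of what you wrote; your extra care about which direction of difference (i.e., that $g\mapsto g+\Delta$ corresponds to the occurrence of $-\Delta$ in $\bigcup_{j\neq i}\D(A_i,A_j)$) is a welcome clarification but not a departure in method.
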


\begin{proof}
Suppose the GSEDF is given by $A_1, \dots , A_m$.
%Let $a = \sum_{i=1}^{m} k_i$.
%Observe that 
%\begin{equation}
%\label{GESDF.eq}
%k_i(a-k_i) = \lambda_i(n-1)
%\end{equation} for $1 \leq i \leq m$.
Let $\S = \{s_1, \dots , s_m\}$ be a set of $m$ sources. For $1 \leq i \leq m$, 
define $A(s_i) = A_i$, so $a_{s_i} = k_i$, and suppose
the encoding function $E(s_i)$ is equiprobable.
We show that $\epsilon_{\Delta,s_i} = (a-a_{s_i})/(n-1)$ for $1 \leq i \leq m$ and all $\Delta \neq 0$.
We have 
\begin{eqnarray*}
\epsilon_{\Delta,s_i} &=& 
\sum_{g \in \good(\Delta,s_i)} \prob[g]\\
 & = &   \frac{\lambda_i}{k_i}\\
& = &  \frac{a - k_i}{n-1} \quad \text{from (\ref{GESDF.eq})}\\
& = & \frac{a - a_{s_i}}{n-1}.
\end{eqnarray*}
\end{proof}

It is possible to prove a converse to Theorem \ref{GSEDF-construction-2} in the 
case where the AMD code has equiprobable encoding.

\begin{theorem}
\label{GSEDF-construction-converse}
Suppose there is an R-optimal strong AMD code with equiprobable encoding.
% such that $\he_s = (a-a_s)/(n-1)$ for all $s \in \S$.
Then the sets $A(s)$ ($s \in \S$) form an  $(n,m;k_1, \dots , k_m;\lambda_1, \dots , \lambda_m)$%
-GSEDF.
\end{theorem}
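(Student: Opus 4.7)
The plan is to leverage Theorem~\ref{strong-probs} together with the equiprobable encoding hypothesis, and then reinterpret the resulting counts as a statement about multisets of differences.

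First I will apply Theorem~\ref{strong-probs}: writing $k_i = a_{s_i}$, R-optimality means $\he_{s_i} = (a-k_i)/(n-1)$, and that theorem then forces $\epsilon_{\Delta,s_i} = (a-k_i)/(n-1)$ for \emph{every} $\Delta \neq 0$ and every $s_i$. Equiprobable encoding means $\prob[E(s_i)=g] = 1/k_i$ for each $g \in A(s_i)$, so formula~(\ref{goodh2s.eq}) rewrites $\epsilon_{\Delta,s_i}$ as $|\good(\Delta,s_i)|/k_i$. Combining these two expressions yields the key equality
\[ |\good(\Delta,s_i)| \;=\; \frac{k_i(a-k_i)}{n-1} \qquad \text{for all } \Delta \neq 0 \text{ and all } i. \]

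The next step is the combinatorial translation. By definition of $\good(\Delta,s_i)$, each $g$ in this set corresponds to the pair $(g+\Delta,g)$ with $g+\Delta \in A_j$ for some necessarily unique $j \neq i$; this exhibits $\Delta = (g+\Delta) - g \in \D(A_j,A_i)$. Conversely, every representation of $\Delta$ as $x-y$ with $x \in A_j$, $y \in A_i$, $j \neq i$ arises this way exactly once, since the $A_j$ are disjoint and so $j$ is determined by $x$. Hence $|\good(\Delta,s_i)|$ is exactly the multiplicity of $\Delta$ in the multiset $\bigcup_{j \neq i}\D(A_j,A_i)$, which must therefore equal the constant $\lambda_i := k_i(a-k_i)/(n-1)$, independently of the choice of $\Delta \neq 0$.

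Finally, I will use the fact that $\D(A_j,A_i) = -\D(A_i,A_j)$ and that $\G \setminus \{0\}$ is closed under negation: this makes the identity $\bigcup_{j \neq i}\D(A_j,A_i) = \lambda_i(\G \setminus \{0\})$ equivalent to $\bigcup_{j \neq i}\D(A_i,A_j) = \lambda_i(\G \setminus \{0\})$, which is precisely the defining condition of a GSEDF with parameters $(n,m;k_1,\dots,k_m;\lambda_1,\dots,\lambda_m)$. I do not anticipate any serious obstacle: the heavy lifting was done in Theorem~\ref{strong-probs}, and the parameter constraint (\ref{GESDF.eq}) is automatically satisfied by the value of $\lambda_i$ produced above. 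The only point deserving care is the bijective step identifying $|\good(\Delta,s_i)|$ with a multiset multiplicity, which relies crucially on the disjointness of the encoding sets $A(s)$ built into the AMD code definition.
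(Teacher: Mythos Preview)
Your proposal is correct and follows essentially the same approach as the paper's proof: both use R-optimality (via Theorem~\ref{strong-probs}) together with equiprobable encoding to show $|\good(\Delta,s_i)| = k_i(a-k_i)/(n-1)$ is constant over $\Delta \neq 0$, and then reinterpret this as the GSEDF multiset identity. You are in fact slightly more careful than the paper, making explicit both the bijection between $\good(\Delta,s_i)$ and representations of $\Delta$ in $\bigcup_{j\neq i}\D(A_j,A_i)$ and the negation step needed to match the GSEDF definition, whereas the paper passes directly to $\D(A(s_i),\G_0\setminus A(s_i))$ without comment.
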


\begin{proof}
Suppose the sources are denoted $\S = \{s_1, \dots , s_m\}$.
Fix a value $i$, $1 \leq i \leq m$ and let $\Delta \neq 0$.
We have 
\begin{eqnarray*}
\epsilon_{\Delta,s_i} &=& \frac{a - a_{s_i}}{n-1} \\
& = & \sum_{g \in \good(\Delta,s_i)} \prob[g]\\
& = &   \frac{|\good(\Delta,s_i)|}{a_{s_i}}.
\end{eqnarray*}
Therefore, for a fixed value $i$, we have
\[ |\good(\Delta,s_i)| = \frac{a_{s_i}(a - a_{s_i})}{n-1}\]
for all $\Delta \neq 0$. This says that
\[ \D(A(s_i), \G_0 \setminus A(s_i)) = \lambda_i (\G \setminus \{0\}),\]
where \[\lambda_i = \frac{a_{s_i}(a - a_{s_i})}{n-1}.\]
\end{proof}

\noindent{\bf Remark:}  The results we have proven in Theorems
\ref{GSEDF-construction-2} and \ref{GSEDF-construction-converse} establish a close
connection between R-optimal strong AMD codes and GSEDF.
In \cite{CFP}, similar results were proven, using the language
of differential structures, that showed the link
between (not necessarily optimal) strong AMD codes and BGSEDF.

\begin{figure}[tb]
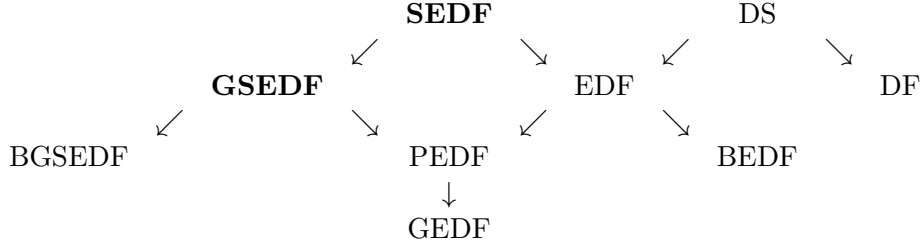

\[
\begin{array}{ccccccccccc}
& &  & & \mathbf{SEDF} & & & &  \mathrm{DS}\\
& &  & \swarrow &  & \searrow & & \swarrow &  & \searrow \\
& & \mathbf{GSEDF} & & & & \mathrm{EDF} & & & & \mathrm{DF}\\ 
& \swarrow &  & \searrow &  & \swarrow & & \searrow &  &  \\
\mathrm{BGSEDF} & &  & & \mathrm{PEDF} & & & &  \mathrm{BEDF}\\
& &  & & \downarrow \\
& &  & & \mathrm{GEDF}
\end{array}
\]
\caption{Difference
families that yield R-optimal strong AMD codes
(indicated in boldface type)}
\label{diff-strong-AMD-fig}
\end{figure}

\subsubsection{G-Optimal Strong AMD Codes}
\label{G-optimal-strong-sec}

Now we turn to G-optimality. We have the following characterization of G-optimal
strong AMD codes.

\begin{theorem}
\label{strong-equiv-2}
A G-optimal strong AMD code is equivalent to an $(n,m;k_1, \dots , k_m;1, \dots , 1)$-BGSEDF.
\end{theorem}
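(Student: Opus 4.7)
The plan is to prove the equivalence in both directions, using the fact that G-optimality, meaning $\he_s = 1/a_s$ for every source $s$, pins down both the encoding distribution on each $A(s)$ and the collision structure of differences among the $A(s)$. The framework is essentially parallel to the weak case handled in Theorem \ref{weak-equiv-2}, but with the bookkeeping done source-by-source because the adversary now knows $s$.

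For the direction BGSEDF $\Rightarrow$ G-optimal, I would set $A(s_i) = A_i$ with equiprobable encoding $\prob[E(s_i)=g] = 1/k_i$ on $A_i$. Fix a source $s_i$ and a $\Delta \neq 0$. An encoding $g \in A_i$ lies in $\good(\Delta,s_i)$ precisely when $g+\Delta \in A_j$ for some $j \neq i$, and the map $g \mapsto (g+\Delta,g)$ identifies $\good(\Delta,s_i)$ with the set of pairs $(x,y)$ with $x \in A_j$ (some $j \neq i$), $y \in A_i$, and $x-y = \Delta$. The BGSEDF hypothesis $\lambda_i = 1$ bounds the latter by $1$, so by (\ref{goodh2s.eq}) we get $\epsilon_{\Delta,s_i} \leq 1/k_i = 1/a_{s_i}$. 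Together with Theorem \ref{strong-bound-G}, this forces $\he_{s_i} = 1/a_{s_i}$.

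For the converse, starting from a G-optimal strong AMD code, the first and crucial step is to show the encoding must be equiprobable on each $A(s)$. The guessing strategy in the proof of Theorem \ref{strong-bound-G} actually achieves success probability $\max_{g\in A(s)} \prob[E(s)=g]$ (choosing a witnessing $\Delta$ exists as long as $\G_0 \setminus A(s) \neq \emptyset$, which is automatic when $m \geq 2$). Hence G-optimality forces $\max_g \prob[E(s)=g] = 1/a_s$, and summing to $1$ over $a_s$ terms then forces $\prob[E(s)=g] = 1/a_s$ for every $g \in A(s)$. Once equiprobable encoding is in hand, for each $\Delta \neq 0$ the identity $\epsilon_{\Delta,s} = |\good(\Delta,s)|/a_s \leq \he_s = 1/a_s$ yields $|\good(\Delta,s)| \leq 1$; reversing the bijection used above then translates this into the defining BGSEDF inequality for $\lambda_j = 1$, with $k_j = a_{s_j}$.

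The main obstacle I expect is the equiprobable-encoding deduction: everything else is a routine bijection argument between $\good(\Delta,s)$ and difference pairs, but establishing that G-optimality rules out nonuniform encoding requires noticing that the adversary's guessing strategy from Theorem \ref{strong-bound-G} in fact attains $\max_g \prob[E(s)=g]$, not merely $1/a_s$. I would also note the trivial case $m=1$, where no strategy wins and the statement holds vacuously.
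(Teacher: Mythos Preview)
Your proposal is correct and follows essentially the same approach as the paper: in both directions you set up equiprobable encoding on the $A_i$, identify $\good(\Delta,s_i)$ with the relevant external-difference count, and for the converse you first deduce equiprobable encoding from the fact that the guessing strategy of Theorem~\ref{strong-bound-G} actually attains $\max_g \prob[E(s)=g]$. The only cosmetic differences are that the paper argues the lower bound in the forward direction by exhibiting a witnessing~$\Delta$ rather than invoking Theorem~\ref{strong-bound-G}, and that you are a bit more explicit about the bijection and the edge case $m=1$.
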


\begin{proof}
Suppose $A_1, \dots , A_m$ is an $(n,m;k_1, \dots , k_m;1, \dots , 1)$-BGSEDF.
Let $\S = \{s_1, \dots , s_m\}$ be a set of $m$ sources. For $1 \leq i \leq m$, define
an encoding function $E(s_i)$ which chooses an element of $A_i$ uniformly at random.
%The resulting weak AMD code is $k$-regular.
Let $1 \leq i \leq m$ and 
choose any $\Delta \in \G \setminus \{0\}$. 
Given that the source is $s_i$, the strategy $g \mapsto g + \Delta$ succeeds with 
probability at most $1/a_{s_i}$, since there is at most 
one $g \in A_i$ such that $g+ \Delta \in A_j$ and $j \neq i$.
Further, there exists a $\Delta \neq 0$ such that this strategy succeeds with probability
$1/a_{s_i}$.

Conversely, suppose we have a G-optimal strong AMD code. 
Let $s \in \S$.
%where  $\epsilon = 1/a$. 
From the proof of Theorem \ref{strong-bound-G}, it is easy to see that
all encodings of $s$ occur with the same probability $1/|A(s)|$.  Now
we claim that $\{ A(s) : s \in \S\}$ is an 
$(n,m;k_1, \dots , k_m;1, \dots , 1)$-BGSEDF. 
Suppose that there existed two different values $g,g' \in A_i$ such that $g+ \Delta \in A_j$,
$g'+ \Delta \in A_{j'}$ and $j,j' \neq i$.
It would then follow that $\he_s \geq 2/|A(s)|$, which is a contradiction.
\end{proof}

%\noindent{\bf Remark:}  In \cite{CFP} it is shown that a deterministic
%weak AMD code is equivalent to a \emph{bounded difference set}. %This essentially the
%special case of Theorem \ref{weak-equiv} applied to deterministic AMD codes.

Now we show that  $k$-regular strong AMD codes  with $m \geq 3$ cannot be simultaneously R-optimal and G-optimal.
%optimal with respect to Corollary \ref{weak-bound-new}.

\begin{theorem}
\label{strong-non-existence}
There does not exist a strong %$\left(m,n,\sqrt{\frac{m-1}{m(n-1)}}\right)$-AMD code 
AMD code with $m \geq 3$ and $\he < 1$ that is simultaneously R-optimal and G-optimal.
\end{theorem}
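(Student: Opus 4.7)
The plan is to combine the definitions of R-optimality and G-optimality to derive a rigid algebraic constraint on the sizes $a_s$, and then rule out nontrivial solutions by invoking Theorem \ref{SEDF-nonexistence}.

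First I would unpack what the two optimality conditions require simultaneously. By Definitions \ref{strong-R-optimal} and \ref{strong-G-optimal}, for every source $s\in \S$ we have $\hat{\epsilon}_s = (a-a_s)/(n-1) = 1/a_s$, which immediately gives the key identity $a_s(a-a_s) = n-1$ for every $s\in \S$. Since $\hat{\epsilon} = \max_s \hat{\epsilon}_s = 1/\min_s a_s$, the assumption $\hat{\epsilon}<1$ forces $a_s\geq 2$ for all $s$. I would also note that G-optimality forces equiprobable encoding within each $A(s)$: if some $g\in A(s)$ had $\prob[g\mid s]>1/a_s$, then, since $m\geq 3$ guarantees $a-a_s>0$, the adversary could guess $g$ and pick $\Delta$ so that $g+\Delta \in \G_0\setminus A(s)$, giving success probability strictly greater than $1/a_s$ and contradicting G-optimality.

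Next, with equiprobable encoding established, Theorem \ref{GSEDF-construction-converse} applies, so the sets $\{A(s):s\in \S\}$ form a GSEDF in which the computed parameter $\lambda_i = a_{s_i}(a-a_{s_i})/(n-1)$ equals $1$ for every $i$. The problem is thus reduced to showing that no such GSEDF can exist with $m\geq 3$ and all $a_s\geq 2$.

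Finally I would finish with a short case analysis driven by the quadratic $x(a-x) = n-1$, which any $a_s$ must satisfy and so takes at most two values. If all $a_s$ are equal to a common value $k\geq 2$, then the GSEDF is actually an $(n,m,k,1)$-SEDF with $m\geq 3$ and $k>1$, which is impossible by Theorem \ref{SEDF-nonexistence}. Otherwise the $a_s$ take two distinct values $k$ and $k'$, both roots of $x^2-ax+(n-1)=0$, so by Vieta's formulas $k+k'=a$; if $p\geq 1$ subsets have size $k$ and $q\geq 1$ have size $k'$ with $p+q=m$, then $a = pk+qk' = k+k'$ gives $(p-1)k+(q-1)k'=0$, forcing $p=q=1$ and hence $m=2$, contradicting $m\geq 3$. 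Either case yields a contradiction.

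The only real obstacle is locating the structural hook: recognizing that the combination of R- and G-optimality is rigid enough to collapse to a GSEDF with uniform $\lambda_i=1$, after which Theorem \ref{SEDF-nonexistence} together with an elementary two-root case analysis finishes the argument.
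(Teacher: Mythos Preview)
Your proof is correct and follows essentially the same route as the paper's: establish equiprobable encoding from G-optimality, apply Theorem~\ref{GSEDF-construction-converse} to obtain a GSEDF with all $\lambda_i=1$, use the quadratic $x(a-x)=n-1$ to force either uniformity or $m=2$, and finish with the SEDF nonexistence result. The only cosmetic differences are that you derive $a_s(a-a_s)=n-1$ directly from the optimality definitions (rather than via Theorem~\ref{strong-equiv-2}) and dispose of the $k=1$ case upfront using $\hat\epsilon<1$, whereas the paper invokes Theorem~\ref{SEDF-existence} at the end; both arguments are equivalent in substance.
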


\begin{proof}
Since the code is G-optimal, it follows from Theorem \ref{strong-equiv-2} and its proof that
the code has equiprobable encoding and is derived from 
$(n,m;k_1, \dots , k_m;1, \dots , 1)$-BGSEDF. Now, since the code is R-optimal and it has equiprobable
encoding, Theorem \ref{GSEDF-construction-converse} shows that the code is derived from 
$(n,m;k_1, \dots , k_m;\lambda_1, \dots , \lambda_m)$-GSEDF.
Thus we have an $(n,m;k_1, \dots , k_m;1, \dots , 1)$-BGSEDF that is also an
$(n,m;k_1, \dots , k_m;\lambda_1, \dots , \lambda_m)$-GSEDF, so it must in fact be
an $(n,m;k_1, \dots , k_m;1, \dots , 1)$-GSEDF.
This implies that $k_i(a - k_i) = n-1$ for all $i$. Given $a$ and $n$,
the equation $x(a-x) = n-1$ has at most two distinct roots, and these roots
sum to $a$. Suppose that $k_i \neq k_j$ for some $i,j$. Then
$k_i + k_j = a$, which implies that $m=2$, a contradiction. Hence the code is
$k$-uniform and the GSEDF is in fact an $(n,m;k;1)$-SEDF.
Now Theorem \ref{SEDF-existence} implies that $k=1$ and
$n=m$. This code has $\he = 1$, so we are done.
\end{proof}

\begin{theorem}
\label{strong-bound-equiv}
There exists a $k$-uniform, strong $(m,n,\he)$-AMD code with 
$\he^2 = \frac{m-1}{n-1} < 1$ if and only if
$m=2$ and $n = k^2 + 1$.
\end{theorem}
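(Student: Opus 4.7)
The plan is to read off the equality case of Theorem \ref{strong-bound} and then invoke the non-existence result Theorem \ref{strong-non-existence} together with the construction from Example \ref{exam10}. Specifically, Theorem \ref{strong-bound} was obtained by multiplying $\he \geq k(m-1)/(n-1)$ (Corollary \ref{strong-bound-2}) and $\he \geq 1/k$ (Corollary \ref{strong-uniform-G}); but since the same $\he$ satisfies both bounds, in fact $\he \geq \max\{k(m-1)/(n-1),\, 1/k\}$, and so $\he^{2}$ is at least the square of this maximum, not merely the product. Consequently, equality $\he^{2} = (m-1)/(n-1)$ forces the two individual bounds to coincide, giving $k(m-1)/(n-1) = 1/k$ (equivalently $n - 1 = k^{2}(m-1)$) and $\he = 1/k$. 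In particular, the hypothetical code is simultaneously R-optimal and G-optimal.

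For the forward implication, note that $\he = 1/k < 1$ forces $k \geq 2$, whence Theorem \ref{strong-non-existence} rules out $m \geq 3$. Since an AMD code has $m \geq 2$ (with a single source the adversary cannot win, and the G-optimality bound of Theorem \ref{strong-bound-G} is vacuous), we conclude $m = 2$, and then $n - 1 = k^{2}(m-1)$ becomes $n = k^{2} + 1$, as required.

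Conversely, suppose $m = 2$ and $n = k^{2} + 1$; the hypothesis $(m-1)/(n-1) < 1$ forces $n > m$, hence $k \geq 2$. Example \ref{exam10} exhibits a $(k^{2}+1, 2; k; 1)$-SEDF. Theorem \ref{GSEDF-construction-2} (applied to this SEDF viewed as a GSEDF with $\lambda_{1} = \lambda_{2} = 1$) produces an R-optimal strong AMD code with $a = 2k$ and $\he_{s_{i}} = (a - a_{s_{i}})/(n-1) = k/k^{2} = 1/k$ for each source. Moreover, a SEDF with $\lambda = 1$ is automatically an $(n, 2; k, k; 1, 1)$-BGSEDF, so Theorem \ref{strong-equiv-2} confirms that the same code is G-optimal. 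Combining, $\he = 1/k$, whence $\he^{2} = 1/k^{2} = 1/(n-1) = (m-1)/(n-1) < 1$.

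The only non-routine step is the elementary equality analysis of the product bound in the first paragraph; the genuinely hard content has already been packaged into Theorem \ref{strong-non-existence} (which killed the possibility of $\lambda = 1$ SEDFs with $m \geq 3$ and $k > 1$) and into Example \ref{exam10}, so I anticipate no real obstacle beyond keeping the bookkeeping straight.
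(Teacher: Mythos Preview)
Your proposal is correct and follows essentially the same route as the paper: deduce that equality in the product bound of Theorem~\ref{strong-bound} forces the code to be simultaneously R-optimal and G-optimal, invoke Theorem~\ref{strong-non-existence} to eliminate $m\geq 3$, read off $n=k^{2}+1$, and for the converse appeal to the SEDF of Example~\ref{exam10}. Your explicit $\max$ argument showing that $\he^{2}=(m-1)/(n-1)$ forces both individual bounds to be tight is a detail the paper leaves implicit, and your converse is spelled out more carefully via Theorems~\ref{GSEDF-construction-2} and~\ref{strong-equiv-2}, but the underlying strategy is identical.
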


\begin{proof}
Here we are considering $k$-uniform song AMD codes that are 
simultaneously R-optimal and G-optimal. From the proof of Theorem \ref{strong-non-existence}, 
we see that $m = 2$ and $k(a-k) = n-1$. Since $a = 2k$, we have $n = k^2+1$.
Conversely, if $m = 2$ and $n = k^2+1$, then Example \ref{exam10}
shows the existence of a $(k^2+1,2;k;1)$-SEDF. This yields a strong AMD code
with $\he = 1/k$, as desired.
\end{proof}

Figure \ref{diff-strong-AMD-fig-2} shows the types of difference
families that yield G-optimal strong AMD codes. The relevant difference
families are assumed to have $\lambda=1$ in this figure.

\begin{figure}[tb]
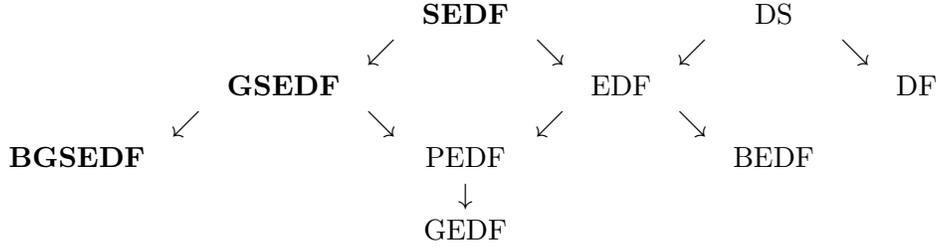

\[
\begin{array}{ccccccccccc}
& &  & & \mathbf{SEDF} & & & &  \mathrm{DS}\\
& &  & \swarrow &  & \searrow & & \swarrow &  & \searrow \\
& & \mathbf{GSEDF} & & & & \mathrm{EDF} & & & & \mathrm{DF}\\ 
& \swarrow &  & \searrow &  & \swarrow & & \searrow &  &  \\
\mathbf{BGSEDF} & &  & & \mathrm{PEDF} & & & &  \mathrm{BEDF}\\
& &  & & \downarrow \\
& &  & & \mathrm{GEDF}
\end{array}
\]
\caption{Difference
families with $\lambda = 1$ that yield G-optimal strong AMD codes
(indicated in boldface type)}
\label{diff-strong-AMD-fig-2}
\end{figure}

\section{Conclusion}
\label{conclusion.sec}

We have studied weak and strong AMD codes that provide optimal
protection against two specific adversarial substitution strategies. 
These codes are termed ``R-optimal'' and ``G-optimal''. We have
considered various types of generalized difference families
and determined when they yield R-optimal and/or G-optimal AMD codes.
As well, we have proven in certain situations that R-optimal and/or G-optimal AMD codes
imply the existence of the relevant difference families, thus
providing a combinatorial characterization of the AMD codes under consideration.

It is an interesting open problem to construct additional examples
of these generalized difference families. In particular, we ask if
there are any examples of strong external difference families
with $k > 1$ and $m > 2$. We are unaware of any such examples
at the present time.

\end{document}